\newtheorem{theorem}{Theorem}[section]
\newtheorem{lemma}[theorem]{Lemma}
\newtheorem{cor}[theorem]{Corollary}
\newtheorem{prop}[theorem]{Proposition}
\theoremstyle{definition}
\newtheorem{definition}[theorem]{Definition}
\theoremstyle{remark}
\newtheorem{remark}[theorem]{Remark}
\newcommand{\qbinomial}[2]{\genfrac[]{0pt}{}{#1}{#2}_q}
\newcommand{\dontprint}[1]\relax
\newcommand{\gl}{{\mathfrak{gl}}}
\newcommand{\vac}{|\mathrm{vac}\rangle}
\newcommand{\IW}{\mathbb{W}}
\newcommand{\la}{\lambda}
\newcommand{\fg}{\mathfrak{g}}
\newcommand{\fh}{\mathfrak{h}}
\newcommand{\fsl}{\mathfrak{sl}}
\begin{document}
\author{Rea Dalipi}
\address{Department of Mathematics, University of Geneva,
Rue du Conseil-Général 7-9,
1205 Geneva, Switzerland.
}
\email{rea.dalipi@unige.ch}
\author{Giovanni Felder}
\address{Department of Mathematics, ETH Zurich, 8092 Zurich, Switzerland}
\email{giovanni.felder@math.ethz.ch}

\makeatletter
\let\@wraptoccontribs\wraptoccontribs
\makeatother

\contrib[with an appendix by]{Anfisa Gurenkova}
\address{Krichever Center for Advanced Studies, Skolkovo Institute of Science and Technology, Moscow, Russia}
\email{anfisa.gurenkova@skoltech.ru}

\title{Howe duality and dynamical Weyl group}

\begin{abstract}
  We give a fermionic formula for $R$-matrices of exterior powers of
  the vector representations of $U_q(\widehat{ \mathfrak{gl}}_N)$ and
  relate it to the dynamical Weyl group of Tarasov--Varchenko and
  Etingof--Varchenko, via a Howe
  ($\mathfrak{gl}_N,\mathfrak{gl}_M)$-duality.  In the limit $N\to\infty$
  we obtain $R$-matrices for Fock spaces.  As a consequence of our result we
  obtain a dynamical action of the Weyl group on integrable
  $U_q\mathfrak{gl}_M$-modules, extending the known action on zero
  weight spaces.  In an Appendix by Anfisa Gurenkova it is shown that
  the latter property also holds if we replace $\mathfrak{gl}_M$ by a
  general symmetrizable Kac--Moody algebra.
\end{abstract}
\maketitle
\section{Introduction}\label{s-1}
The symmetric groups play two different roles in the representation
theory of general linear groups.  On one side, let us call it the Weyl
side, the symmetric group $S_N$ of permutations of $N$ letters is the
subgroup of $\mathit{GL}_N$ of permutation matrices and thus acts on
any representation of $\mathit{GL}_N$.  On the other side, say the
Schur side, $S_M$ is the group of permutation of factors of the $M$th
tensor power $U^{\otimes M}=U\otimes\cdots\otimes U$ of a
representation $U$ of $\mathit{GL}_N$ and the action of $S_M$ commutes
with the natural action of $\mathit{GL}_N$ on $U^{\otimes M}$. We will
focus on the case where $U$ is an exterior power of the vector
representation, but let us be general for the time being.

Both roles admit a $q$-deformed version but the symmetric group needs
to be replaced by the braid group.  On the Weyl side one has an action
of the braid group $B_N$ on finite dimensional representations of the
quantum enveloping algebras $U_q\mathfrak{gl}_N$ deforming the above
action of the symmetric group. This is a special case of the action on
integrable representations of the Artin braid group corresponding to
the Weyl group of any semisimple Lie algebra \cite{Lusztig1993,
  LevendorskiiSoibelman1990}.  On the Schur side, the universal $R$-matrix
of $U_q\mathfrak gl_N$ defines a representation of the braid group
$B_M$ on the tensor power $U^{\otimes M}$ of a finite dimensional
representation $U$ of $U_q\gl_N$. The action of the generator $s_i$,
braiding the $i$th with the $i+1$st strand is given by the action of
the $R$-matrix on the $i$th and $i+1$st factor.

Both sides are degenerate cases of versions with parameters, called
spectral (or equivariant) parameters on the Schur side and dynamical
(or Kähler) parameters on the Weyl side. In terms of representation
theory, this generalization amounts to replacing $U_q\mathfrak{gl}_N$
by its affine version $U_q\widehat{\mathfrak{gl}}_N$. A feature of
the versions with parameters, whose clarification is a goal of this
paper, is that the symmetric group plays the leading role, rather than
the braid group that is relevant in the degenerate limit. The
$R$-matrices of quantum groups first appeared in statistical mechanics
and quantum field theory as solutions
$R(z)\in\operatorname{End}(U\otimes U)$ of the Yang--Baxter equation,
depending meromorphically on a complex spectral parameter $z$. The
Yang--Baxter equation is equivalent to the braiding relation
\eqref{e-3} for the braiding matrix $\check R(z)=P\circ R(z)$
obtained by composing the $R$-matrix with the permutation
$P(u\otimes v)=v\otimes u$ of factors. Moreover one has the inversion
(or unitarity) relation $\check R(z^{-1})\check R(z)=\mathrm{id}$.
One obtains, on the Schur side, a representation of the symmetric
group $S_M$ on $U^{\otimes M}$-valued meromorphic functions
$f(z_1,\dots, z_M)$ of $M$ variables, such that the generator
$s_i=(i,i+1)$ is mapped to
\[
s_if(z_1,\dots,z_N) = \mathrm{id}^{\otimes (i-1)}\otimes \check
 R(z_i/z_{i+1})\otimes \mathrm{id}^{\otimes (N-i-1)}
 f(z_1,\dots,z_{i+1},z_i,\dots,z_N).
\]
Representations of the braid group on $U^{\otimes M}$ appear in
the limits $z_i/z_{i+1}\to 0$, $i=1,\dots,M-1$ or $\infty$. Indeed $R(z)$
converges in the limit $z\to\infty$ to the solution of the Yang--Baxter equation
without spectral parameter corresponding to $U_q\gl_N$. The relation
$s_i^2=1$ is lost in this limit and one is left with a representation of the
braid group.

On the Weyl side, the discovery of a version of the quantum Weyl group
action with parameters came later
\cite{TarasovVarchenko2000,EtingofVarchenko2002} and is called
dynamical Weyl group.  More recently \cite{OkounkovSmirnov2022} a geometric
realization of a dynamical Weyl group action on the equivariant
$K$-theory of Nakajima varieties was found.  Let
$T\subset SL_M(\mathbb C)$ be the Cartan torus of diagonal matrices,
$P_M=\operatorname{Hom}(T,\mathbb C^\times)$ the weight lattice. Then
finite dimensional representations $U$ of $U_q\mathfrak{sl}_M$ (of
type I with $q$ not a root of unity) have a weight decomposition
$U=\oplus_{\mu\in P_M}U[\mu]$. The Weyl group $S_M$ acts on $T$ and
$P_M$, defining a action of the braid group via the canonical
homomorphism $B_M\to S_M$. A dynamical action of the braid group $B_M$
on an $\mathfrak{sl}_M$-module $U$ is a representation $\rho$ of the
braid group on rational $U$-valued function on $T$ of the form
\[
  (\rho(g)f)(t)=A_{g}(g^{-1}t)f(g^{-1}t),
\]
for some rational functions $t\mapsto A_g(t)\in\operatorname{End}(U)$,
compatible with the $T$-action on $U$ in the sense that
$A_g(t)(U[\mu])\subset U[g\mu]$.  The theory of intertwining operators
developed in \cite{TarasovVarchenko2000, EtingofVarchenko2002}, which works
for arbitrary simple Lie algebras, gives rise to a dynamical action of
the braid group with many interesting properties, such as universal
formulas for $A_g(t)|_{U[\mu]}$ as the action of an element of
$U_q\mathfrak{sl}_M$ independent of $U$ and a compatibility with
inclusions of Lie subalgebras, implying that $A_{s_i}(t)$ for
generators $s_i$ is obtained as the image of the corresponding element
of $U_q(\mathfrak{sl}_2)$ by the inclusion associated with the $i$th
simple root.

In this paper we take $U=\bigwedge V$, the direct sum of exterior
powers of the vector representation $V=\mathbb C^N$ of $U_q\gl_N$, and
relate the two actions of the symmetric group via quantum (skew) Howe
duality. The classical Howe duality \cite{Howe1992} is based on the
observation that on $\bigwedge(\mathbb C^N\otimes\mathbb C^M)$ we have
natural commuting actions of $\mathit{GL}_N$ and $\mathit{GL}_M$ whose
images span each other's commutant in the endomorphism ring. This
implies that endomorphisms of
$(\bigwedge V)^{\otimes M}=\bigwedge V\otimes\cdots\otimes\bigwedge V$
commuting with the $\mathit{GL_N}$ action on $V=\mathbb C^N$ are
spanned by a commuting action of $\mathit{GL}_M$. There is a also a
symmetric version of Howe duality, where exterior powers are replaced
by symmetric powers.  Quantum version of Howe dualities are known.
{}The case of symmetric powers was first observed and proved by Toledano Laredo in
\cite{ToledanoLaredo2002}. The skew-symmetric case was then considered in
\cite{CautisKamnitzerLicata2010} and \cite{LehrerZhangZhang2011}.  See
\cite{Stroppel2022} for recent developments of this story.  In Section
\ref{s-3} we present an approach based on the action of the Clifford
algebra which is more elementary than the categorification approach of
\cite{CautisKamnitzerMorrison2014}, see Proposition \ref{p-5}.

In particular one obtains a $U_q\gl_M$-action on
$(\bigwedge V)^{\otimes M}$ commuting with the diagonal
$U_q\gl_N$-action.  The upshot is that the action of the symmetric
group on $(\bigwedge V)^{\otimes M}$-valued functions by braiding
matrices with spectral parameter is realized by a dynamical Weyl group
action for the $U_q\gl_M$-action, after a suitable identification of
spectral with dynamical parameters.  Thus we connect the Schur with
the Weyl side. In the limit of infinite spectral/dynamical parameter
we recover the result of \cite{CautisKamnitzerLicata2010} relating
braid group action with quantum Weyl group action.

For $M=2$ we obtain an explicit formula for the $R$-matrix for pairs
of exterior powers of the vector representation in terms of
$U_q\gl_2$, see Theorem \ref{t-1}. These $R$-matrices were calculated
in \cite{DateOkado1994} by the fusion procedure and are expressed in
terms of projections onto irreducible subrepresentations of the tensor
product. Our alternative formula does not rely on the decomposition
into irreducible subrepresentations. It is a sum of terms, each with a
simple pole in the spectral parameters, and gives an interpretation of
the residues at the poles.  Also, it is given by a formula without
explicit dependence on $N$. In fact it has a well-defined limit
as $N\to\infty$ resulting in solutions of the Yang--Baxter equations
for fermionic Fock spaces, a.k.a.\ spaces of  semi-infinite exterior powers,
see Theorem \ref{t-Fock}.

When written in terms of Clifford algebra
generators, our formula for the $R$-matrix of
$\bigwedge^kV\otimes\bigwedge^kV$ converges in the limit
$q\to1,z\to 1$ to the formula proposed by Smirnov \cite{Smirnov2016}
in the case of the Yangian, see Theorem \ref{t-3}.  Finding a
conceptual proof of Smirnov's formula was an initial motivation of
this work. Note also that similar
constructions in this limit are known in the context of
Knizhnik--Zamolodchikov and dynamical equations
\cite{ToledanoLaredo2002,TarasovVarchenko2002,MukhinTarasovVarchenko2006, MukhinTarasovVarchenko2008,
  MukhinTarasovVarchenko2009, TarasovUvarov2020, Uvarov2022}.

As an application of our result, we obtain in Theorem \ref{t-2} and
Corollary \ref{c-1} a dynamical action of the {\em symmetric group} on
integrable representations  of $U_q\gl_M$, rather than of the braid
group as in \cite{EtingofVarchenko2002}. In fact our dynamical action
on functions with values in a representation $U$
differs from the dynamical action of the braid group defined in
\cite{EtingofVarchenko2002} by a transformation
$A_{g}(t)|_{U[\mu]}\mapsto A_{g}(t)|_{U[\mu]}f_{g,\mu}(t)$ for some
scalar meromorphic functions $f_{g,\mu}(t)$ obeying the cocycle
condition $f_{gh,\mu}(t)=f_{g,h\mu}(ht)f_{h,\mu}(t)$ for any braids
$g,h$, see Proposition \ref{p-3}. While in \cite{EtingofVarchenko2002}
the dynamical action of the braid group factors through the symmetric
group only when restricted to the zero weight space $U[0]$, our
modified action defines a representation of the symmetric group on the
whole space of $U$-valued functions.
In fact the same is true for general Kac--Moody algebras: our modified action
defines an action of the Weyl group on functions with values
in integrable representations, see Theorem \ref{t-KacMoody}. A proof
is provided by Anfisa Gurenkova in Appendix \ref{a-2}.

Part of the results of this paper were announced in the Oberwolfach report
\cite{Felder2022}.

\subsection*{Acknowledgments} 
This research was supported in part by
the National Centre of Competence in Research SwissMAP (grant numbers
182902, 205607) of the Swiss National Science Foundation. G. F.
was also supported by the grant 196892 of the Swiss National
Science Foundation.  This article was written in part while G. F.
was at IHES, which he thanks for hospitality.  A. G. was supported by the
RSF under project 23-11-00150.

We thank Tommaso Botta for various useful discussions and suggestions.
The second author thanks the organizers of the mini-workshop on
``Recent developments in representation theory and mathematical physics'' at the
Mathematisches Forschungsinstitut Oberwolfach, where these results were presented,
and the participants, expecially Evgeny Mukhin and Catharina Stroppel for discussions.
We thank Andrey Smirnov and Huafeng Zhang for useful correspondence.

\section{Braiding matrices for exterior powers of the vector representation}\label{s-2}
\subsection{\texorpdfstring{$q$}{q}-number notation}
We adopt the standard $q$-number notation: for $n\in\mathbb Z_{\geq0}$,
$[n]_q=\frac{q^n-q^{-n}}{q-q^{-1}}$. The $q$-factorial is
defined by $[0]_q!=1$, $[n]_q!=\prod_{j=1}^n[j]_q$, $n\geq 1$
and the $q$-binomial coefficient (or Gauss polynomial) is
\[
  \qbinomial{n}{j}=\frac{[n]_q!}{[j]_q![n-j]_q!}.
\]
All these objects belong to $\mathbb Z[q,q^{-1}]$.

\subsection{Quantum enveloping algebras of type
  \texorpdfstring{$A_{N-1}$}{A} and
 \texorpdfstring{ $A_{N-1}^{(1)}$}{A(1)}}
Drinfeld--Jimbo quantum enveloping algebras are the subject of several textbooks,
for example \cite{Lusztig1993, ChariPressley1994}. Here we collect our
conventions.
Let $q$ be a non-zero complex number which is not a root of unity.
Let $(a_{ij})_{i,j=0}^{N-1}$ be the Cartan matrix of $A_{N-1}^{(1)}$:
$a_{ij}=2$ if $i=j$, $a_{ij}=-1$ if $j=i\pm 1\mod N$, and $a_{ij}=0$
otherwise.  It is convenient to identify the index set $\{0,\dots,N-1\}$
with $\mathbb Z/N\mathbb Z$ in view of the symmetry $a_{i+1,j+1}=a_{ij}$.
The Hopf algebra $U'_q(\widehat{\mathfrak{sl}}_N)$ is the
unital algebra over $\mathbb C$ generated by $e_i,f_i,k_i^{\pm1}$,
$(i=0,\dots, N-1)$ and relations
\begin{align*}
  k_ik_j&=k_jk_i,\quad k_ik_i^{-1}=k_i^{-1}k_i=1
  \\
  k_ie_jk_i^{-1}
        &=q^{a_{ij}}e_j,\quad k_if_jk_i^{-1}=q^{-a_{ij}}f_j,
\\
  e_if_j-f_je_i
        &=\delta_{ij}\frac{k_i-k_i^{-1}}{q-q^{-1}}
  \\
  \sum_{r=0}^{1-a_{ij}} &\qbinomial {1-a_{ij}}r e_i^re_je_i^{1-a_{ij}-r}
  =0,\quad \text{if $i\neq j$,}
  \\
    \sum_{r=0}^{1-a_{ij}}& \qbinomial {1-a_{ij}}r f_i^rf_jf_i^{1-a_{ij}-r}
  =0,\quad \text{if $i\neq j$.}
\end{align*}
The coproduct is defined on generators as
$\Delta(e_i)=e_i\otimes 1+k_i\otimes e_i$,
$\Delta(f_i)=f_i\otimes k_i^{-1}+ 1\otimes f_i$ and
$\Delta(k^{\pm1}_i)=k^{\pm1}_i\otimes k^{\pm1}_i$.  The element
$\prod_{i=0}^{N-1}k_i$ is central and group-like. For finite
dimensional representations of $U_q'(\widehat{\mathfrak{sl}}_N)$ this
central element acts by 1. The quantum loop algebra
$U_qL{\mathfrak{sl}}_N$ is the quotient of $U'_q(\widehat{\mathfrak{sl}}_N)$
by the relation $\prod_{i=0}^{N-1}k_i=1$. The Hopf algebra structure
descends to this quotient. It is a deformation of the universal
enveloping algebra of the Lie algebra $L\mathfrak{sl}_N={\mathfrak{sl}}_N[t,t^{-1}]$ of
loops in ${\mathfrak{sl}}_N$.  The Hopf algebra
$U_qL{\mathfrak{sl}}_N$ contains the Hopf subalgebra
$U_q {\mathfrak{sl}}_N$ generated by $e_i,f_i,k_i$ with
$1\leq i\leq N-1$.

For $k\in\mathbb Z_{\geq0}$ the $k$th divided power of generators is
defined as
\[
  e_i^{(k)}=\frac{e_i^k}{[k]_q!},\quad f_i^{(k)}=\frac{f_i^k}{[k]_q!},
\]
with the convention that $e_i^{(0)}=f_i^{(0)}=1$.

Let $\mathbb Z^N=\oplus_{i\in\mathbb Z/N\mathbb Z}\mathbb Z\epsilon_i$
be the (co)weight lattice of $\gl_N$ with standard inner product
$(\ |\ )$, which identifies the Cartan subalgebra of diagonal matrix with its dual.
The root lattice of vectors of zero coordinate sum is
spanned by the simple roots $\alpha_i=\epsilon_i-\epsilon_{i+1}$,
$(i=1,\dots, N-1)$. We also set $\alpha_0=\epsilon_N-\epsilon_1$, the
highest root.  The Hopf algebra $U_qL\gl_N$ is obtained by adjoining
generators $t_\lambda$ for $\lambda\in \mathbb Z^N$ such that
$t_{\alpha_i}=k_i$ ($i=1,\dots,N-1$) with the relations
\[
  t_\lambda t_\mu=t_{\lambda+\mu},\quad t_0=1,\quad t_\lambda
  e_it_{-\lambda}=q^{(\lambda|\alpha_i)}e_i, \quad t_\lambda
  f_it_{-\lambda}=q^{-(\lambda|\alpha_i)}f_i.
\]
It is a Hopf algebra with
$\epsilon(t_\lambda)=1,
S(t_\lambda)=t_{-\lambda},\Delta(t_\lambda)=t_\lambda\otimes
t_\lambda$.  We set $t_i=t_{\epsilon_i}$ so that $k_i=t_it_{i+1}^{-1}$
for $i\in\mathbb Z/N\mathbb Z$. The Hopf subalgebra $U_q\gl_N$ is
generated by  $e_i,f_i$ with $1\leq i\leq N-1$ and all $t_i^{\pm1}$.

The multiplicative group $\mathbb C^\times $ acts on $U_qL\gl_N$ by
algebra automorphisms, deforming the rescaling action of the loop
parameter: the automorphism $\varphi_z$ associated with
$z\in\mathbb C^\times$ acts as the identity on all generators except
$e_0$, $f_0$, for which
\[
  \varphi_z(e_0)=ze_0,\quad \varphi_z(f_0)=z^{-1}f_0.
\]
The weight space of a $U_q\gl_N$-module of weight $\mu\in \mathbb Z^N$
is the simultaneous eigen\-space of $t_\lambda$ with eigenvalue
$q^{(\mu|\lambda)}$.  A $U_q\gl_N$-module is of {\em type I} if it is the direct
sum of its finite dimensional weight spaces.  All modules considered
in this paper are of type I.

\subsection{Exterior powers of the vector representation}
\label{ss-2.3}
Let $V=\mathbb C^N$ with basis $v_1,\dots,v_N$ and
$C_N$ be the Clifford algebra of $V\oplus V^*$ with quadratic form
$Q(v\oplus \alpha)=\alpha(v)$. It is the unital algebra  generated by
the images of the standard basis  and dual basis vectors
$\psi^*_i,\psi_i$, labeled by $i\in\{1,\dots,N\}$, which we often identify
with $\mathbb Z/N\mathbb Z$,  subject to the  relations
\begin{equation}\label{e-1}
  \psi_i\psi_j=-\psi_j\psi_i,\quad\psi_i^*\psi_j^*=-\psi_j^*\psi_i^*,\quad
  \psi_i\psi_j^*+\psi_j^*\psi_i=\delta_{ij}1.
\end{equation}
The Clifford algebra $C_N$ has a unique irreducible representation up
to isomorphism. It is $\bigwedge V$ with $\psi^*_i$ acting as
$v\mapsto v_i\wedge v$ and $\psi_i$ by contraction. It is generated by
$|0\rangle=1$ such that $\psi_i|0\rangle=0$ for all $i$. The commuting
``fermion number'' operators $n_i=\psi_i^*\psi_i$ have a common
eigenbasis $v_{i_1}\wedge\cdots\wedge v_{i_k}$ ($i_1<\dots<i_k$) and
eigenvalue $1$ if $i=i_j$ for some $j$ and $0$ otherwise. Thus
$t^{\pm1}_i:=q^{\pm n_i}=q^{\pm1}\psi_i^*\psi_i+\psi_i\psi_i^*$ obeys
the relations
\begin{equation}\label{e-2}
t_it_i^{-1}=1,\quad t_it_j=t_jt_i,\quad  t_i\psi_j^*t_i^{-1}=q^{\delta_{ij}}\psi_j^*,\quad
  t_i\psi_jt_i^{-1}=q^{-\delta_{ij}}\psi_j.
\end{equation}
for all $i,j$. Let $C_N(q)$ be the algebra with generators $t_i,\psi_i,\psi_i^*$,
($i=1,\dots,N$) and relations \eqref{e-1}, \eqref{e-2}. It is a $\mathbb Z/2\mathbb Z$-graded
algebra with $\psi,\psi_i^*$ odd and $t_i$ even, and $\bigwedge V$ is a $\mathbb Z/2\mathbb Z$-graded
module, with the convention that $|0\rangle$ is even.
\begin{prop}[Hayashi \cite{Hayashi1990}]\label{p-1}
  There is a homomorphism $U_qL\mathfrak{gl}_N
  \to C_N(q)$ such that
  \[
    e_i\mapsto \psi_i^*\psi_{i+1},\quad
    f_i\mapsto\psi_{i+1}^*\psi_i,\quad
    t_i^{\pm1}\mapsto t_i^{\pm 1},
  \]
  for $i\in\mathbb Z/N\mathbb Z$.
\end{prop}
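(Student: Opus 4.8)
The plan is to verify directly that the proposed assignment $e_i\mapsto\psi_i^*\psi_{i+1}$, $f_i\mapsto\psi_{i+1}^*\psi_i$, $t_i^{\pm1}\mapsto t_i^{\pm1}$ extends to an algebra homomorphism, by checking that the images satisfy all the defining relations of $U_qL\gl_N$. Since $C_N(q)$ is presented by generators and relations, it suffices to check the relations one at a time. First I would record two elementary consequences of \eqref{e-1}: that $\psi_i\psi_j^*=\psi_j^*\psi_i$ whenever $i\neq j$ (so that expressions labelled by disjoint indices commute), and that $(\psi_i^*\psi_j)(\psi_j^*\psi_i)=\psi_i^*\psi_i\cdot\psi_j\psi_j^*$ and similar normal-ordering identities obtained by repeatedly applying $\psi_j\psi_j^*=1-\psi_j^*\psi_j$. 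It is also convenient to note $t_i=q^{n_i}$ with $n_i=\psi_i^*\psi_i$ the fermion number, so that $t_i$ acts semisimply with eigenvalues in $\{1,q\}$ and conjugation by $t_i$ on the generators is governed exactly by \eqref{e-2}.

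The relations among the $t_\lambda$ (equivalently among the $k_i$ and $t_i$) are immediate since the $n_i$ commute. For the Cartan action $t_\lambda e_i t_{-\lambda}=q^{(\lambda|\alpha_i)}e_i$ one computes, using \eqref{e-2}, that $t_j(\psi_i^*\psi_{i+1})t_j^{-1}=q^{\delta_{ji}-\delta_{j,i+1}}\psi_i^*\psi_{i+1}$, and $\delta_{ji}-\delta_{j,i+1}=(\epsilon_j|\epsilon_i-\epsilon_{i+1})=(\epsilon_j|\alpha_i)$, which gives the relation for all $\lambda\in\mathbb Z^N$ by multiplicativity; the $f_i$ case is the same with opposite sign. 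For the commutator relation $e_if_j-f_je_i=\delta_{ij}(k_i-k_i^{-1})/(q-q^{-1})$: when $i\neq j$ the index sets $\{i,i+1\}$ and $\{j,j+1\}$ either are disjoint, in which case the two quadratic expressions commute, or overlap in exactly one index, in which case a short normal-ordering computation (moving all $\psi^*$ to the left) shows the commutator still vanishes because each monomial contains a repeated $\psi$ or a repeated $\psi^*$, hence is zero. When $i=j$ one computes $(\psi_i^*\psi_{i+1})(\psi_{i+1}^*\psi_i)-(\psi_{i+1}^*\psi_i)(\psi_i^*\psi_{i+1})=\psi_i^*\psi_i(1-\psi_{i+1}^*\psi_{i+1})-\psi_{i+1}^*\psi_{i+1}(1-\psi_i^*\psi_i)=n_i-n_{i+1}$, and one checks that $q^{n_i-n_{i+1}}-q^{-(n_i-n_{i+1})}=(q-q^{-1})(n_i-n_{i+1})$ holds when applied to any common eigenvector of $n_i,n_{i+1}$ (the only nonzero case being eigenvalues differing by $\pm1$, where $q^{\pm1}-q^{\mp1}=\pm(q-q^{-1})$), so the image of $(k_i-k_i^{-1})/(q-q^{-1})$ is indeed $n_i-n_{i+1}$.

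The main obstacle will be the quantum Serre relations $\sum_{r=0}^{1-a_{ij}}\qbinomial{1-a_{ij}}{r}e_i^re_je_i^{1-a_{ij}-r}=0$ for $i\neq j$, and their $f$-analogues. When $a_{ij}=0$ (indices $i,j$ not adjacent mod $N$) the relation reduces to $e_ie_j-e_je_i=0$, which follows from the disjoint-index commutativity noted above. The substantive case is $a_{ij}=-1$ (so $j=i\pm1\bmod N$), where the relation reads $e_i^2e_j-[2]_q\,e_ie_je_i+e_je_i^2=0$. Here one uses crucially that each $\psi_k$ (and each $\psi_k^*$) squares to zero: for instance $e_i^2=(\psi_i^*\psi_{i+1})^2$ contains $\psi_{i+1}^2=0$, so $e_i^2=0$ in $C_N(q)$, and the Serre sum collapses to $-[2]_q\,e_ie_je_i$; one then checks $e_ie_je_i=0$ directly — for $j=i+1$, $e_ie_je_i=(\psi_i^*\psi_{i+1})(\psi_{i+1}^*\psi_{i+2})(\psi_i^*\psi_{i+1})$, and normal-ordering produces a factor $\psi_i^*\cdots\psi_i^*$ (two copies of $\psi_i^*$), forcing the product to vanish; the case $j=i-1$ and the $f$-relations are handled symmetrically. (When $N=2$ the indices wrap around and one must treat $a_{01}=a_{10}=-2$ in the affine case, but the vanishing $e_i^2=f_i^2=0$ makes those Serre relations hold trivially as well.) Once all relations are verified, the universal property of the presentation of $U_qL\gl_N$ yields the homomorphism.
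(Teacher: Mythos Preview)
The paper does not give its own proof of this proposition; it is quoted from Hayashi. Your direct verification of the defining relations is the standard approach.

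A minor slip: you write $\psi_i\psi_j^*=\psi_j^*\psi_i$ for $i\neq j$, but \eqref{e-1} gives $\psi_i\psi_j^*=-\psi_j^*\psi_i$; the conclusion that the even elements $e_i,f_j$ with disjoint index sets commute is still correct (two anticommutations cancel), and your Serre-relation argument via $e_i^2=0$ is fine.

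There is, however, a genuine gap in the verification of $e_if_i-f_ie_i=(k_i-k_i^{-1})/(q-q^{-1})$. You correctly show the left side maps to $n_i-n_{i+1}$, then identify $t_i$ with $q^{n_i}$ and argue on eigenvectors that the right side does too. That identification is valid on the Fock module $\bigwedge V$ but not in the abstract algebra $C_N(q)$ as the paper presents it: the relations \eqref{e-2} do not force $t_i=q\psi_i^*\psi_i+\psi_i\psi_i^*$. Indeed, letting $\psi_i,\psi_i^*$ act on $\bigwedge V$ as usual and $t_i$ act as $\lambda_i q^{n_i}$ for arbitrary nonzero scalars $\lambda_i$ gives a representation of $C_N(q)$ in which $(t_it_{i+1}^{-1}-t_i^{-1}t_{i+1})/(q-q^{-1})$ and $n_i-n_{i+1}$ disagree (already on $|0\rangle$) whenever $\lambda_i\neq\lambda_{i+1}$. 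So the homomorphism cannot land in $C_N(q)$ as presented; it lands in $\operatorname{End}(\bigwedge V)$, or equivalently in the quotient of $C_N(q)$ by the additional relations $t_i=q\psi_i^*\psi_i+\psi_i\psi_i^*$. Your eigenvector check is precisely the verification in that quotient. This reflects an imprecision in the paper's formulation of $C_N(q)$ rather than a flaw in your method, but you should make the target of the homomorphism explicit rather than silently conflate the abstract generator $t_i$ with $q^{n_i}$.
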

Thus we have a representation of $U_qL\gl_N$ on $\bigwedge V$. It is
the direct sum of the irreducible eigenspaces $\bigwedge^kV$ of the
central element $\prod_{i=1}^N t_i$. These irreducible representations
are generated by the highest weight vectors
\begin{equation}\label{e-hw}
v^k=\psi_1^*\cdots\psi_k^*|0\rangle
\end{equation}
of weight $\sum_{i=1}^k\epsilon_i$.  In fact they are obtained from
the representation of the classical Lie algebra $L\gl_N$ on the same
representation space and with the same action of the generators
$e_i,f_i$ by setting $t_\lambda=q^{\lambda}$ for Cartan elements
$\lambda$.  The point is that, since the eigenvalues of $\epsilon_{i}$
are $1$ or $0$, the right-hand side $(q^{h_i}-q^{-h_i})/(q-q^{-1})$ of
the commutation relation of $e_i,f_i$ coincides with the action of its
classical counterpart $h_i=\epsilon_i-\epsilon_{i+1}$.

\begin{definition}\label{def-ext}
  The {\em $j$-th exterior power of the vector representation} of $U_qL\gl_N$ with spectral
  parameter  $z$ is the twist $\bigwedge^jV(z)$
  of $\bigwedge^jV$ by the automorphism $\varphi_z$. The vector representation is
  $V(z)=\bigwedge^1V(z)$.
\end{definition}
Thus the action of the generators is given by the formulae of
Proposition \ref{p-1} except that $e_0,f_0$ are mapped to
$z\psi_N^*\psi_{1}$ and $z^{-1}\psi_1^*\psi_N$, respectively.
\begin{remark} Our definition of the exterior power  representation
  has a slightly non-standard choice of signs. A more common convention is to
  let $e_0$ and $f_0$ act as $(-1)^{j-1}z\psi_N^*\psi_1$, $(-1)^{j-1}z^{-1}\psi_1^*\psi_N$, respectively,
  which in the limit $q\to 1$ correspond to the action of $zE_{N,1},z^{-1}E_{1,N}$, in terms of the
  standard basis $(E_{i,j})$ of $\gl_N$, so that $z$ has the
  interpretation of an ``evaluation point.'' Our convention is better suited to the fermionic representation
  and somewhat reduces the number of signs in the formulas. The relation of our definition of
  the exterior power $\bigwedge^jV(z)$ to the definition $V^{(j)}(z)$ in \cite{DateOkado1994}
  is $\bigwedge^jV(z)=V^{(j)}((-1)^{j-1}z)$,
\end{remark}

Another interpretation of the exterior powers, useful to compute $R$-matrices by the fusion procedure,
is in terms of $q$-deformed exterior products:
\begin{prop}[Jimbo \cite{Jimbo1986}]
  Let $k\geq2$ and $S$ be the subspace of $V^{\otimes k}$ spanned by
  $\alpha\otimes v_i\otimes v_i\otimes\beta$,
  $\alpha\otimes (v_i\otimes v_j+q v_j\otimes v_i)\otimes \beta$ for
  $i>j$, $\alpha\in V^{\otimes r},\beta\in V^{\otimes k-r-2}$,
  $r=0,\dots,k-2$. Then $S$ is a $U_qL\gl_N$ submodule of
  \[
    V^k(z):=V(q^{-k+1}z)\otimes\cdots\otimes V(q^{k-3}z)\otimes V(q^{k-1}z)
  \]
  and we have the isomorphism of $U_qL\gl_N$-modules
  $V^k(z)/S\to \bigwedge^k V(z)$ such that
  $v_{i_1}\otimes \cdots \otimes v_{i_k}+S\mapsto
  v_{i_1}\wedge\cdots\wedge v_{i_k}$ for $1\leq i_1<\cdots<i_k\leq N$.
\end{prop}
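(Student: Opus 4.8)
The plan is to prove the two claims separately: that $S$ is a $U_qL\gl_N$-submodule, and that the quotient is isomorphic to $\bigwedge^k V(z)$. For the first I would reduce, using coassociativity of the coproduct, to the case $k=2$, where it becomes a short explicit check with the Clifford-algebra action of Proposition~\ref{p-1}. For the second I would use that the quantum exterior algebra is flat, so that the induced map $v_{i_1}\otimes\cdots\otimes v_{i_k}+S\mapsto v_{i_1}\wedge\cdots\wedge v_{i_k}$ ($i_1<\cdots<i_k$) is a linear isomorphism, and then verify it intertwines all the generators by computing on ordered monomials. The hard part will be the sign and $q$-power bookkeeping in this last step: it is there that one sees that the precise spectral shifts $q^{-k+1}z,q^{-k+3}z,\dots,q^{k-1}z$ are forced, the $q^{-k+1}$ on the first factor cancelling the $q$-power produced by reordering $v_N$ to the rightmost slot.

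First, $S$ is spanned by weight vectors, hence stable under all $t_\lambda$, so only $e_\ell,f_\ell$ ($\ell\in\mathbb Z/N\mathbb Z$) need attention. Writing $S=\sum_{r=0}^{k-2}V^{\otimes r}\otimes S_2\otimes V^{\otimes(k-r-2)}$ with $S_2\subset V\otimes V$ the span of the $v_i\otimes v_i$ and $v_i\otimes v_j+qv_j\otimes v_i$ ($i>j$) in factors $r+1,r+2$, one has $\Delta^{(k-1)}(e_\ell)=\sum_{m=1}^{k}k_\ell^{\otimes(m-1)}\otimes e_\ell\otimes 1^{\otimes(k-m)}$; the terms with $m\le r$ or $m\ge r+3$ preserve each summand $V^{\otimes r}\otimes S_2\otimes V^{\otimes(k-r-2)}$ because $S_2$, being weight-graded, is $(k_\ell\otimes k_\ell)$-stable, and the two terms $m=r+1,r+2$ add up to $k_\ell^{\otimes r}\otimes\Delta(e_\ell)\otimes1^{\otimes(k-r-2)}$ on it; similarly for $f_\ell$ with $k_\ell^{-1}$ to the right. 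Thus $S$ is invariant provided $\Delta(e_\ell)(S_2)\subseteq S_2$ and $\Delta(f_\ell)(S_2)\subseteq S_2$ for the two-factor module, which here has the form $V(w)\otimes V(q^2w)$, an adjacent pair of factors of $V^k(z)$. For $1\le\ell\le N-1$ the action on $V(w)$ is independent of $w$, and $\Delta(e_\ell)(S_2)\subseteq S_2$ follows at once from $\Delta(e_\ell)=e_\ell\otimes1+k_\ell\otimes e_\ell$ on the handful of two-dimensional subspaces involved; for $\ell=0$, using that $e_0$ acts by $w\,\psi_N^*\psi_1$ on the first and $q^2w\,\psi_N^*\psi_1$ on the second factor while $k_0$ sends $v_1\mapsto q^{-1}v_1$, $v_N\mapsto qv_N$, one finds e.g.\ $\Delta(e_0)(v_1\otimes v_1)=w(v_N\otimes v_1+qv_1\otimes v_N)\in S_2$ and $\Delta(e_0)(v_N\otimes v_1+qv_1\otimes v_N)=(q^{3}+q)w\,(v_N\otimes v_N)\in S_2$, the remaining generators of $S_2$ going to $0$ or to a generator of the same shape, and $f_0$ is analogous. (This is where the sign conventions of Definition~\ref{def-ext} are needed.)

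Next, the associative algebra $T(V)/(v_i\otimes v_i,\ v_i\otimes v_j+qv_j\otimes v_i\ (i>j))$ is the quantum exterior algebra; by the diamond lemma for the rewriting ``$v_i\otimes v_i\mapsto0$, $v_i\otimes v_j\mapsto-qv_j\otimes v_i$ for $i>j$'' — whose only overlap ambiguities are the triples $v_a\otimes v_b\otimes v_c$ with $a\ge b\ge c$, all confluent — or equivalently by Koszul duality with the (manifestly flat) quantum affine space, the ordered monomials form a basis in every degree. Hence $S$ has codimension $\binom Nk$ in $V^{\otimes k}$, the classes of $v_{i_1}\otimes\cdots\otimes v_{i_k}$ with $i_1<\cdots<i_k$ form a basis of $V^k(z)/S$, and the assignment in the statement defines a linear isomorphism $\phi\colon V^k(z)/S\to\bigwedge^k V(z)$; moreover $v_{j_1}\otimes\cdots\otimes v_{j_k}\equiv0$ mod $S$ when two indices coincide, and otherwise equals $(-q)^{\ell}$ times the ordered monomial, $\ell$ being the number of inversions.

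Finally, to see $\phi$ is $U_qL\gl_N$-equivariant it suffices (ordered monomials span) to compare $\phi(x\cdot(v_I+S))$ with $x\cdot\phi(v_I+S)$ for $x\in\{e_\ell,f_\ell,t_\lambda\}$ and $v_I=v_{i_1}\otimes\cdots\otimes v_{i_k}$, $i_1<\cdots<i_k$. The $t_\lambda$ case is immediate. For $e_\ell$, $1\le\ell\le N-1$, only the term of $\Delta^{(k-1)}(e_\ell)$ acting by $e_\ell$ on the factor $v_{\ell+1}$ survives; if $v_\ell$ is also present the $k_\ell$-weight produces $v_\ell\otimes v_\ell$ in adjacent slots, i.e.\ $0$ mod $S$, matching $\psi_\ell^*\psi_{\ell+1}$ annihilating the wedge, and otherwise the resulting monomial is already ordered with sign matching $\psi_\ell^*\psi_{\ell+1}$ on $v_{i_1}\wedge\cdots\wedge v_{i_k}$ (the sign from contracting cancels the one from reordering the wedge). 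The decisive case is $e_0$: only the first term of $\Delta^{(k-1)}(e_0)$ contributes, and only if $i_1=1$, yielding $q^{-k+1}z\,v_N\otimes v_{i_2}\otimes\cdots\otimes v_{i_k}$; if $N\in\{i_2,\dots,i_k\}$ this is $0$ mod $S$, and otherwise reordering $v_N$ past the $k-1$ strictly smaller vectors to the last slot produces $(-q)^{k-1}$, so the class equals $(-1)^{k-1}z\,(v_{i_2}\otimes\cdots\otimes v_{i_k}\otimes v_N)+S$, with $\phi$-image $z\,v_N\wedge v_{i_2}\wedge\cdots\wedge v_{i_k}=z\,\psi_N^*\psi_1(v_1\wedge v_{i_2}\wedge\cdots\wedge v_{i_k})$, which is exactly $e_0$ on $\bigwedge^k V(z)$; the case of $f_0$ is symmetric, with $(q^{k-1}z)^{-1}$ cancelling the reordering factor $(-q)^{k-1}$ to leave $z^{-1}\psi_1^*\psi_N$. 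Hence $\phi$ intertwines all generators and is the asserted isomorphism of $U_qL\gl_N$-modules. I expect the main obstacle to be precisely this last paragraph's bookkeeping together with the flatness input of the previous one; a less elementary alternative would realize $S$ as the image of the Hecke-algebra $q$-symmetrizers on $V^{\otimes k}$ and invoke quantum Schur--Weyl duality, but the route above is more in keeping with the Clifford-algebra setup of Proposition~\ref{p-1}.
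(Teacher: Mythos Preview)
Your argument is correct. Note, however, that the paper does not supply its own proof of this proposition: it is quoted as a result of Jimbo \cite{Jimbo1986} and used without further justification, so there is no ``paper's proof'' to compare against.

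That said, your write-up is a clean, self-contained verification well adapted to the paper's Clifford-algebra conventions. The reduction of the submodule claim to the adjacent pair $V(w)\otimes V(q^2w)$ via coassociativity is exactly right, and your explicit checks for $\Delta(e_0)$ and $\Delta(f_0)$ on $S_2$ pinpoint why the ratio $q^2$ between consecutive spectral parameters is the only one that works. The flatness of the quantum exterior algebra (diamond lemma or Koszul duality) is the standard input, and your equivariance check for $e_0$ and $f_0$---where the spectral shifts $q^{\mp(k-1)}z$ on the extreme factors cancel the reordering factor $(-q)^{k-1}$ to leave the bare $z^{\pm1}$ of Definition~\ref{def-ext}---is the heart of the matter and is carried out accurately. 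One small stylistic point: in case (b) of your $e_\ell$ check ($\ell\in I$), the $k_\ell$-prefactor also produces an extra $q$, but since the result contains $v_\ell\otimes v_\ell$ and hence lies in $S$, this is harmless. The alternative route you mention at the end, via Hecke symmetrizers and quantum Schur--Weyl duality, is closer in spirit to Jimbo's original treatment, but your direct approach fits better with the fermionic setup of Section~\ref{ss-2.3}.
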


\subsection{Braiding matrices}
For generic $z_1,z_2$ the tensor products module
$\bigwedge^{k} V(z_1)\otimes \bigwedge^{k'} V(z_2)$ with action of
$U_q\widehat{\gl}_N$ given by the coproduct are irreducible and do not depend
on the order of factors up to isomorphism, {}see \cite[Proposition 12.2.15]{ChariPressley1994}. Thus there is an
isomorphism of $U_q\widehat{\gl}_N$-modules
\[
  \check R_{k,k'}(z_1/z_2)\colon\textstyle{\bigwedge^{k} V(z_1)\otimes
  \bigwedge^{k'} V(z_2)\to \bigwedge^{k'} V(z_2)\otimes
  \bigwedge^{k} V(z_1)},
\]
which, viewed as a linear map between the underlying vector spaces,
depends on the ratio of spectral parameters.  We find it convenient to
normalize it in such a way that $v^k\otimes v^{k'}$ is mapped to
$(-1)^{kk'}v^{k'}\otimes v^k$. Then the $R$-matrix
\[
  R_{k,k'}(z_1/z_2)=P\check R_{k,k'}(z_1/z_2)\in\operatorname{End}_{\mathbb C}(
  \textstyle{\bigwedge^{k} V(z_1)\otimes
  \bigwedge^{k'} V(z_2)}),
\]
obtained by the composition with the graded flip
$P\colon v\otimes w\to (-1)^{kk'}w\otimes v$, restricts to the identity
on the product of highest weight spaces.

  Then by the irreducibility of tensor products
at generic values of the spectral parameter we have the braiding relation
\begin{align}\label{e-3}
  (\check R_{k_2k_3}(z_2/z_3)
  &\otimes\mathrm{id})
    (\mathrm{id}\otimes\check R_{k_1k_3}(z_1/z_3))
    (\check R_{k_1k_2}(z_1/z_2)\otimes\mathrm{id})\notag
  \\
  &=
    (\mathrm{id}\otimes\check R_{k_1k_2}(z_1/z_2))
    (\check R_{k_1k_3}(z_1/z_3)\otimes\mathrm{id})
     (\mathrm{id}\otimes\check R_{k_2k_3}(z_2/z_3)),
\end{align}
on $
\bigwedge^{k_1}V(z_1)\otimes \bigwedge^{k_2}V(z_2)\otimes \bigwedge^{k_3}V(z_3)$,
equivalent to the Yang--Baxter equation for the $R$-matrices.
Moreover we have the inversion (or unitarity) relation
\begin{equation}\label{e-4}
  \check R_{k'k}(z^{-1})\check R_{kk'}(z)=\mathrm{id}.
\end{equation}
These $R$-matrices were computed
by Date and Okado \cite{DateOkado1994}, see also \cite{BrackenGouldZhangDelius1994},
in terms of projections onto irreducible
submodules for the action of $U_q\gl_N$. The coefficients were computed using
the fusion procedure \cite{KulishReshetikhinSklyanin1981}.
Here we give an alternative fermionic formula.

Let $\bigwedge V\otimes\bigwedge V$ be the tensor product of
$\mathbb Z/2\mathbb Z$-graded vector spaces.  The action of tensor
products of linear maps is given by the sign rule:
$(f\otimes g)(v\otimes w)=(-1)^{|g||v|}fv\otimes gw$ for $g$ of degree
$|g|$ and $v$ of degree $|v|$. We introduce the following even
endomorphisms of $\bigwedge V\otimes\bigwedge V$:
\begin{equation}\label{e-EFK}
      E=\sum_{i=1}^N
       (\psi^{*}_i\otimes {\psi}_i)\prod_{j>i}{K}_j
  \qquad
    F=-\sum_{i=1}^N
       \prod_{j<i}{K}^{{-1}}_j({\psi}_i\otimes\psi^{*}_i)
  \qquad
  {K}=\prod_{i=1}^N{{K}}_i,
\end{equation}
with  $K_i=t_i\otimes t_i^{-1}$ ($i=1,\dots,N$).
\begin{lemma}\label{l-1}
  The endomorphisms $E,F,K$ commute with the action of $U_q\gl_N$ and
  obey the commutation relations of $U_q\mathfrak{sl}_2$:
  \[
    [E,F]=\frac{K-K^{-1}}{q-q^{-1}},\quad KEK^{-1}=q^2E,\quad
    KFK^{-1}=q^{-2}F.
  \]
\end{lemma}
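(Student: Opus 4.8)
The plan is to reduce everything to a computation in a Clifford algebra on $2N$ fermions. Because of the sign rule, the operators $\psi_i\otimes 1,\psi_i^*\otimes 1$ and $1\otimes\psi_i,1\otimes\psi_i^*$, together with $t_i\otimes 1,1\otimes t_i$, acting on $\bigwedge V\otimes\bigwedge V$, satisfy the relations \eqref{e-1},\eqref{e-2} of the Clifford algebra on $2N$ fermions — the two families $\psi\otimes 1,\psi^*\otimes 1$ and $1\otimes\psi,1\otimes\psi^*$ anticommute with one another — so I would work throughout with these. Two consequences of the conjugation rules $K_m(\psi_i^*\otimes\psi_i)K_m^{-1}=q^{2\delta_{mi}}(\psi_i^*\otimes\psi_i)$, $K_m(\psi_i\otimes\psi_i^*)K_m^{-1}=q^{-2\delta_{mi}}(\psi_i\otimes\psi_i^*)$ are immediate: first, each $\prod_{m>i}K_m$ and each $\prod_{m<i}K_m^{-1}$ commutes with every color-$i$ fermion bilinear, so the ordering in \eqref{e-EFK} is immaterial; second, $KEK^{-1}=q^2E$ and $KFK^{-1}=q^{-2}F$, since $K=\prod_mK_m$ contributes exactly the factor $K_i$ (resp.\ $K_i^{-1}$) to the $i$-th summand of $E$ (resp.\ $F$).

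For $[E,F]=(K-K^{-1})/(q-q^{-1})$ I would write $E=\sum_iE_i$, $F=\sum_iF_i$ with $E_i=(\psi_i^*\otimes\psi_i)\prod_{m>i}K_m$ and $F_i=-\bigl(\prod_{m<i}K_m^{-1}\bigr)(\psi_i\otimes\psi_i^*)$. For $i\neq k$ the operators $E_i,F_k$ involve disjoint colors and the intervening $K$'s pass through them, so $[E_i,F_k]=0$. For $i=k$, all the $K$-factors commute with the color-$i$ bilinears, so $[E_i,F_i]=-\bigl[(\psi_i^*\otimes\psi_i),(\psi_i\otimes\psi_i^*)\bigr]\prod_{m>i}K_m\prod_{m<i}K_m^{-1}$, and a one-line Clifford computation gives the bracket as $(n_i\otimes 1)-(1\otimes n_i)$. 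Summing, on a joint eigenspace of the $n_i\otimes1$ and $1\otimes n_i$, with $x_m\in\{-1,0,1\}$ the eigenvalue of $(n_m\otimes1)-(1\otimes n_m)$ and $p_i=x_1+\dots+x_i$, one finds $[E,F]=q^{p_N}\sum_i(p_i-p_{i-1})q^{-p_i-p_{i-1}}$; the elementary identity $(p_i-p_{i-1})q^{-p_i-p_{i-1}}=(q^{-2p_{i-1}}-q^{-2p_i})/(q-q^{-1})$ — valid precisely \emph{because} $|p_i-p_{i-1}|\le 1$ — makes this telescope to $(q^{p_N}-q^{-p_N})/(q-q^{-1})=(K-K^{-1})/(q-q^{-1})$.

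For commutation with $U_q\gl_N$: since $K=(\prod_it_i)\otimes(\prod_it_i)^{-1}$ and $\prod_it_i$ is central in $U_q\gl_N$, $K$ commutes with $\Delta$ of every generator; and $E,F$ have $\gl_N$-weight $0$ for the diagonal action, hence commute with every $\Delta(t_\lambda)$. The substantive relation is $[E,\Delta(e_j)]=0$ (and the three obtained by replacing $E$ by $F$ or $e_j$ by $f_j$, by the same argument). Writing $\Delta(e_j)=(\psi_j^*\psi_{j+1}\otimes 1)+(k_j\otimes 1)(1\otimes\psi_j^*\psi_{j+1})$ with $k_j=t_jt_{j+1}^{-1}$, I would bracket each summand with $E$: the single-term brackets $[\psi_j^*\psi_{j+1},\psi_i^*]=\delta_{i,j+1}\psi_j^*$ and $[\psi_j^*\psi_{j+1},\psi_i]=-\delta_{ij}\psi_{j+1}$ in one tensor factor, together with the $q^{\mp1}$-shift picked up when $\psi_j^*\psi_{j+1}$ is commuted past $K_j$ resp.\ $K_{j+1}$, leave only the summands $i\in\{j,j+1\}$ of $E$; one is left with $(\psi_j^*\otimes\psi_{j+1})\prod_{m>j+1}K_m$ from the first summand of $\Delta(e_j)$ and $-(k_j\otimes 1)(\psi_j^*\otimes\psi_{j+1})\prod_{m>j}K_m$ from the second. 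These cancel: using $k_jt_{j+1}=t_j$ and the relations $t_j\psi_j^*=q\psi_j^*$, $\psi_{j+1}t_{j+1}^{-1}=q^{-1}\psi_{j+1}$ — consequences of \eqref{e-1},\eqref{e-2}, i.e.\ of the fact that the fermion numbers are $0$ or $1$ — the second term reduces to $-(\psi_j^*\otimes\psi_{j+1})\prod_{m>j+1}K_m$.

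I expect this last cancellation to be the real work: keeping straight the graded-tensor-product signs and the $q$-powers produced when $e_j$, acting on one tensor factor, is commuted through the ordered product $\prod_{m>i}K_m$ inside $E$. Conceptually it is exactly quantum skew Howe duality in the case $M=2$. An alternative that trivializes the $U_q\mathfrak{sl}_2$-relations (but not the commutation with $U_q\gl_N$) is to recognize $E,F,K$ as the images of $e_1,f_1,t_1t_2^{-1}$ under the algebra homomorphism $U_q\gl_2\to\operatorname{End}(\bigwedge V\otimes\bigwedge V)$ obtained by composing the $N$-fold iterated opposite coproduct of $U_q\gl_2$ with the homomorphism of Proposition \ref{p-1} applied to each of the $N$ ``colors'' (with $N$ there replaced by $2$, and regrouping the $2N$ fermions by color instead of by tensor factor); the commutation relations are then inherited from those in $U_q\gl_2$.
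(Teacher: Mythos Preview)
Your argument is correct. For the commutation with $U_q\gl_N$ your computation of $[E,\Delta(e_j)]$ is essentially identical to the paper's proof of Lemma~\ref{l-2} (the general $M$ case), with the same key cancellation coming from $t_j\psi_j^*=q\psi_j^*$ and $\psi_{j+1}t_{j+1}^{-1}=q^{-1}\psi_{j+1}$. Where you differ is in the $U_q\mathfrak{sl}_2$ relations: the paper never verifies them directly but instead observes, in Section~\ref{s-3}, that $E,F,K$ are by construction the images of $e_1,f_1,k_1\in U_q\gl_2$ under the Hayashi homomorphism composed with the $N$-fold opposite coproduct, so the relations are inherited; you mention this route at the end but give as your main argument a hands-on telescoping computation of $[E,F]$ on simultaneous eigenspaces. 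Your telescoping argument is neat and self-contained, and makes transparent exactly where the fermionic constraint $n_i\in\{0,1\}$ enters (in the identity $(p_i-p_{i-1})q^{-p_i-p_{i-1}}=(q^{-2p_{i-1}}-q^{-2p_i})/(q-q^{-1})$); the paper's route, by contrast, gets the relations for free and generalises immediately to $U_q\gl_M$. One small imprecision: in your claim that $[E_i,F_k]=0$ for $i\neq k$ because ``the intervening $K$'s pass through them'', note that when $i<k$ the factor $K_k$ in $E_i$ does \emph{not} commute with $\psi_k\otimes\psi_k^*$, nor does $K_i^{-1}$ in $F_k$ commute with $\psi_i^*\otimes\psi_i$; the point is that the resulting $q^{-2}$ and $q^{2}$ cancel.
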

This is an instance of Howe duality and is discussed in the next
section. It is convenient to extend this action to an action of
$U_q\gl_2$ by setting $T_1=\prod_{i=1}^N t_i\otimes 1$,
$T_2=\prod_{i=1}^N1\otimes t_i$ so that
$\bigwedge^{k}V\otimes \bigwedge^{k'}V$ is the weight space of weight
$(k,k')$.

\begin{theorem}\label{t-1}
  The braiding matrix
  \[
    \check
  R_{k,k'}(z_1/z_2)\colon\textstyle{\bigwedge^kV(z_1)\otimes\bigwedge^{k'}V(z_2)\to
  \bigwedge^{k'}V(z_2)\otimes\bigwedge^{k}V(z_1)},
\]
normalized so that
  $\check R_{k,k'}(z_1/z_2)\,v^k\otimes v^{k'}=(-1)^{kk'}v^{k'}\otimes v^{k}$, is
  given by
  \[
    \check R_{k,k'}(z_1/z_2)=(-1)^{\min(k,k')}A_{k-k'}(z_1/z_2)
  \]
  where
  $A_{k-k'}(z_1/z_2)\in U_q\mathfrak{gl}_2$ is defined by
  \[
    A_m(z)=\sum_{j=0}^n
    (-q)^{j}\frac{1-q^{|m|}z}{1-q^{2j+|m|}z}
    \begin{cases}
      E^{(j)}F^{(j+m)},& \text{if $m\geq0$,}
      \\
      E^{(j-m)}F^{(j)},& \text{if $m\leq 0$,}
     \end{cases}
  \]
  with the divided power notation $E^{(l)}=E^l/[l]_q!$, $F^{(l)}=F^l/[l]_q!$ and
  \[
    n=\min(k,k',N-k,N-k').
  \]
\end{theorem}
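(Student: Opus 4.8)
\emph{Plan.} The plan is to exploit the rigidity of $\check R_{k,k'}(z_1/z_2)$: by construction it is the morphism of $U_q\widehat{\gl}_N$-modules from $\bigwedge^kV(z_1)\otimes\bigwedge^{k'}V(z_2)$ to $\bigwedge^{k'}V(z_2)\otimes\bigwedge^kV(z_1)$ sending $v^k\otimes v^{k'}$ to $(-1)^{kk'}v^{k'}\otimes v^k$, and for generic $z_1/z_2$ these modules are irreducible and isomorphic, so a nonzero $U_q\widehat{\gl}_N$-morphism between them is unique up to scalar and is pinned down by its value on the cyclic vector $v^k\otimes v^{k'}$. It therefore suffices to show that $B_m(z):=(-1)^{\min(k,k')}A_m(z)$, with $m=k-k'$ and $z=z_1/z_2$, (i) commutes with the $U_q\gl_N$-action and maps $\bigwedge^kV\otimes\bigwedge^{k'}V$ to $\bigwedge^{k'}V\otimes\bigwedge^kV$, (ii) intertwines the affine generators $e_0,f_0$, and (iii) sends $v^k\otimes v^{k'}$ to $(-1)^{kk'}v^{k'}\otimes v^k$; then $B_m(z)=\check R_{k,k'}(z)$. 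Of these, (i) and (iii) carry no content beyond Lemma \ref{l-1} and bookkeeping in the Clifford algebra, so the effort goes into (ii).

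\emph{The structural form.} For (i): by Lemma \ref{l-1}, $E,F$ lie in the image of $U_q\gl_2$, which centralizes $U_q\gl_N$, so $A_m(z)$ commutes with $U_q\gl_N$; and $E^{(j)}F^{(j+m)}$ (resp.\ $E^{(j-m)}F^{(j)}$ when $m\le0$) has $(T_1,T_2)$-weight $(-m,m)$, hence sends $\bigwedge^kV\otimes\bigwedge^{k'}V$ into $\bigwedge^{k-m}V\otimes\bigwedge^{k'+m}V=\bigwedge^{k'}V\otimes\bigwedge^kV$. An elementary count of how many fermions \eqref{e-1} can be transferred between the two tensor factors shows that $E^{(j)}F^{(j+m)}$ kills the weight-$(k,k')$ space as soon as $j>n=\min(k,k',N-k,N-k')$, so the sum defining $A_m(z)$ is finite there and has poles only at the reducibility points $z=q^{-2j-|m|}$. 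Conceptually this is the Howe-duality form of the answer: $U_q\gl_N$-linearity by itself forces $\check R_{k,k'}(z)=\sum_{j=0}^{n}c_j(z)E^{(j)}F^{(j+m)}$, because $\bigwedge^kV\otimes\bigwedge^{k'}V$ has exactly $n+1$ irreducible $U_q\gl_N$-constituents and the $n+1$ operators $E^{(j)}F^{(j+m)}$ are linearly independent on the weight-$(k,k')$ space; in particular $c_j(z)$ can depend on $k,k',N$ only through $m$, a feature that reappears in Theorem \ref{t-3}.

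\emph{Determining the coefficients --- the main step.} For (ii) I would impose that $\check R_{k,k'}(z)$ intertwine the affine generator $e_0$ (the case of $f_0$ being analogous, for $m\ge0$; the case $m\le0$ is symmetric). By Proposition \ref{p-1} and Definition \ref{def-ext}, on $\bigwedge^kV(z_1)\otimes\bigwedge^{k'}V(z_2)$ one has $\Delta(e_0)=z_1\,(\psi^*_N\psi_1)\otimes 1+z_2\,(t_N t_1^{-1})\otimes(\psi^*_N\psi_1)$, while the target module carries the same expression with $z_1,z_2$ interchanged. Writing everything inside $C_N(q)$ and using that $\sum_j c_j(z)E^{(j)}F^{(j+m)}$ already commutes with $U_q\gl_N$, the relation $\check R_{k,k'}(z)\,\Delta_{z_1,z_2}(e_0)=\Delta_{z_2,z_1}(e_0)\,\check R_{k,k'}(z)$ becomes, after clearing the denominator $\prod_{j=0}^{n}(1-q^{2j+|m|}z)$, a polynomial identity in $z=z_1/z_2$ whose coefficients---equivalently, the residues at the simple poles $z=q^{-2j-|m|}$---give the recursion for consecutive coefficients
\[
  c_{j+1}(z)\,(1-q^{2(j+1)+|m|}z)=-q\,c_{j}(z)\,(1-q^{2j+|m|}z),
\]
with solution $c_j(z)=c_0(z)\,(-q)^j\,(1-q^{|m|}z)/(1-q^{2j+|m|}z)$. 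This rests on a pair of commutation identities in the Clifford algebra, of the schematic form $[\Delta(e_0),E^{(j)}F^{(j+m)}]\in U_q\gl_N\cdot\{E^{(j-1)}F^{(j-1+m)},\,E^{(j+1)}F^{(j+1+m)}\}$, whose exact shape makes the telescoping sum $\sum_j c_j(z)[\Delta(e_0),E^{(j)}F^{(j+m)}]$ collapse precisely for the prefactors above; establishing these commutators and verifying the collapse is, I expect, the one genuine obstacle in the proof.

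\emph{Fixing the constant and concluding.} For (iii): from \eqref{e-EFK}, \eqref{e-1} and the action on the highest weight vectors \eqref{e-hw} one sees immediately that $E(v^k\otimes v^{k'})=0$ when $m\ge0$ and $F(v^k\otimes v^{k'})=0$ when $m\le0$, so $v^k\otimes v^{k'}$ generates under $E,F$ a copy of the $(|m|+1)$-dimensional $U_q\mathfrak{sl}_2$-module and only the $j=0$ term of $A_m(z)$ acts nontrivially on it, giving $\check R_{k,k'}(z)(v^k\otimes v^{k'})=c_0(z)F^{(m)}(v^k\otimes v^{k'})$ for $m\ge0$ (resp.\ $c_0(z)E^{(|m|)}(v^k\otimes v^{k'})$). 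Evaluating this expression, which relocates $|m|$ fermions between the two tensor factors, and matching with $(-1)^{kk'}v^{k'}\otimes v^k$, a direct sign computation with \eqref{e-1}, \eqref{e-2} gives $c_0(z)=(-1)^{\min(k,k')}$, a constant (consistent with $(1-q^{|m|}z)/(1-q^{|m|}z)=1$ at $j=0$). Feeding this into the recursion reproduces the $A_m(z)$ of the statement, whence $B_m(z)=\check R_{k,k'}(z)$; the Yang--Baxter relation \eqref{e-3} and the unitarity \eqref{e-4} are then automatic.
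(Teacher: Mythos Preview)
Your approach differs genuinely from the paper's. The paper does \emph{not} verify the intertwining condition with the affine generator $e_0$ directly. Instead it takes as input the Date--Okado formula for $\check R_{k,k'}(z)$, written as a sum over the projectors $Q_s$ onto the irreducible $U_q\gl_N$-summands of $\bigwedge^kV\otimes\bigwedge^{k'}V$, and matches it against $A_m(z)$ summand by summand. Concretely: via Howe duality the summands are indexed by $s=0,\dots,n$ and generated by highest weight vectors $v_s^{(k,k')}$ proportional to $F^{(s)}v_0^{(k+s,k'-s)}$ (for $k\ge k'$); Date--Okado gives the scalar by which $\check R_{k,k'}(z)$ acts on each summand as an explicit product $\prod_{j=1}^s(z-q^{|m|+2j})/(1-zq^{|m|+2j})$; a separate lemma in the appendix computes $A_m(z)$ on the weight-$m$ vector of the irreducible $U_q\mathfrak{sl}_2$-module of highest weight $|m|+2s$ by a residue/partial-fractions argument and finds the same product; comparing the two, the only residual work is tracking the sign from the normalization of $v_s^{(k,k')}$, which produces the $(-1)^{\min(k,k')}$.

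What each route buys: the paper's argument is short once Date--Okado and the $\mathfrak{sl}_2$-lemma are available, and it makes the link between the $U_q\gl_N$-spectral decomposition and the Howe-dual $U_q\gl_2$-action completely explicit. Your route, if completed, would be self-contained---no appeal to the fusion computation of \cite{DateOkado1994}---and would derive the coefficients $c_j(z)$ directly from the affine intertwining condition, which is conceptually attractive.

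However, your step (ii) is at present only a plan. You write down the expected recursion $c_{j+1}(z)(1-q^{2(j+1)+|m|}z)=-q\,c_j(z)(1-q^{2j+|m|}z)$ and assert it follows from commutator identities of the schematic shape $[\Delta(e_0),E^{(j)}F^{(j+m)}]\in U_q\gl_N\cdot\{E^{(j\pm1)}F^{(j\pm1+m)}\}$, but you neither compute these commutators nor verify the telescoping. Since $e_0$ involves $\psi_N^*\psi_1$ while $E,F$ are sums over all indices $i$ dressed by products of $K_j$, the commutator is not structurally obvious; obtaining a clean two-term recursion requires that the cross-terms organize themselves just so, and that is precisely the content you have deferred. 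You acknowledge this yourself, so what you have is a coherent strategy rather than a proof: parts (i) and (iii) are fine, but the heart of the argument remains to be done.
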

The proof relies on Howe duality, which we explain in the next section,
and the computation
\cite{DateOkado1994} of the scalar by which $\check R$ acts on each irreducible
subrepresentation of the tensor product.

\subsection{Limit \texorpdfstring{$N\to\infty$}{N to infinity} and braiding matrices on Fock spaces}
Our formula for braiding matrices on  exterior powers of vector representations
does not depend explicitly on the dimension $N$ and have a formal
limit as $N\to\infty$ to braiding matrices acting on Fock spaces realized
following Dirac as semi-infinite forms $\mathcal F=\oplus_k\bigwedge^{\frac\infty2+k}\mathbb C^{\infty}$.
Here we show that the limit can be rigorously constructed so that our braiding
matrices converge to braiding matrices defined on Fock spaces.

Let $C_\infty$ be the infinite dimensional Clifford algebra generated
by $b_n$, $c_n$, $n\in\mathbb Z+\frac12$, with relations
$b_nc_m+c_mb_n=\delta_{n,-m}$. The Fock space $\mathcal F$ is the
module over $C_\infty$ generated by a vacuum vector $\vac$ annihilated
by $b_n$ and $c_n$ for $n>0$ and carries a representation of Lie
algebras of infinite matrices, see
e.g. \cite{KacRainaRozhkovskaya2013}, Lecture 4.  The Clifford algebra
has a $\mathbb Z$-grading so that $b_n$ has degree 1 and $c_n$ has
degree $-1$ for all $n$.  We have a compatible grading
\[
  \mathcal F=\oplus_{k\in\mathbb Z}\mathcal F_k.
\]
defined by giving degree $0$ to $\vac$.

We realize $\mathcal F_k$ as a direct limit
of $\mathcal F_k^{(N)}=\bigwedge^{N/2+k}\mathbb C^N$ as $N\to\infty$
with $N$ even.

Namely we renumber the standard basis of $\mathbb C^N$ as
$v_{-\frac{N-1}2},\dots,v_{\frac{N-3}2},v_{\frac{N-1}2}$ to identify $\mathbb C^N$
as a subspace of $\mathbb C^{N+2}$ for each $N$, and we embed
$i_N\colon \bigwedge \mathbb C^N\hookrightarrow \bigwedge \mathbb C^{N+2}$ as
\[
  i_N(\alpha)=\alpha\wedge v_{\frac{N+1}2},\quad N\in2\mathbb N.
\]
Then $i_N$ is a homomorphism of modules over the subalgebra
$C_{N}$ generated by $b_n,c_n$ for $|n|\leq (N-1)/2$, so
that $b_n$ is the exterior product with $v_n$ and $c_{-n}$ is the
contraction with the dual $v_n^*$. It is a {\em graded} homomorphism
if we assign degree $k$ to the subspace
$\mathcal F_k^{(N)}=\bigwedge^{\frac N2+k}\mathbb C^N$.  The relation
to the operators on $\bigwedge\mathbb C^N$ of Section \ref{ss-2.3} is
\[
  \psi^*_n=b_{-\frac {N+1}2+n},\quad \psi_n=c_{\frac{N+1}2-n},\quad
  (n=1,\dots,N) .
\]
Let 
\[
  \vac^{(N)}=v_{1/2}\wedge v_{3/2}\wedge\cdots\wedge v_{\frac{N-1}2}.
\]
Then $b_n\vac^{(N)}=0=c_n\vac^{(N)}$ for $n>0$ and $i_N$ maps
$\vac^{(N)}$ to $\vac^{(N+2)}$.  Thus {}$\mathcal F^{(N)}=\bigwedge \mathbb C^N$ is identified
with the subspace of {}$\mathcal F$ generated by $\vac$ over $C_N$ and we have 
a filtration of graded vector spaces
\[
  \cdots\subset \mathcal F^{(N)}\subset \mathcal F^{(N+2)}\subset\cdots,\quad N\in2\mathbb Z,
\]
with direct limit $\cup_{N}\mathcal F^{(N)}=\mathcal F$. In particular,
\[
  \mathcal F_k=\cup_N\mathcal F_k^{(N)}
\]
The renormalized fermion number $\tilde n_i\in C_N$ is
\[
  \tilde n_i=\,\colon b_ic_{-i}\colon=
  \begin{cases} b_ic_{-i}& \text{if $i<0$,}
    \\
    -c_{-i}b_i& \text{if $i>0$.}
  \end{cases}
\]
It commutes with $i_N$ and is normalized so that $\tilde n_i\vac^{(N)}=0$. Then
$\tilde t_i=q^{\tilde n_i}$ is well-defined on the direct limit. It has the properties
\begin{equation}\label{e-tt}
  \tilde t_n\vac=1,\quad
  \tilde t_nb_m\tilde t_n^{-1}=q^{\delta_{n,m}}b_m,\quad \tilde
  t_nc_{-m}\tilde t_n^{-1}=q^{-\delta_{n,-m}}c_{-m},
\end{equation}
for all $n,m\in\mathbb Z+\frac12$.
The restriction of $\tilde t_n$ to $\mathcal F^{(N)}$ is
related to the generator $t_n$ of Section \ref{ss-2.3} by
$\tilde t_n=q^{-N/2}t_{n+(N-1)/2}$.

The representation of $U_q\mathfrak{sl}_2$ on
$\mathcal F^{(N)}\otimes \mathcal F^{(N)}$ obtained by Howe duality
defined by \eqref{e-EFK} is then given in this new notation by the following
action of generators (where we add the dependence on $N$ in the notation):
\begin{align*}
  E(N)&=\sum_{i=-\frac{N-1}2}^{\frac{N-1}2} (b_i\otimes
  c_{-i})\prod_{j=i+1}^{\frac{N-1}2}\tilde K_j,
\quad
  F(N)=-\sum_{i=-\frac{N-1}2}^{\frac{N-1}2} \prod_{j=-\frac{N-1}2}^{i-1}\tilde
  K_j(c_{-i}\otimes b_i)
  \\
  K(N)&=\prod_{i=-\frac{N-1}2}^{\frac{N-1}2}\tilde K_i,\quad \tilde
  K_i=\tilde t_i\otimes \tilde t_i^{-1}
\end{align*}
\begin{prop}
  Let $X(N)=E(N),F(N),$ or $K(N)$ and $N\in2\mathbb Z$. Then
  \[
    (i_N\otimes i_N)\circ X(N)=X(N+2)\circ (i_N\otimes i_N).
  \]
  Thus $E(N)$, $F(N)$, $K(N)$ are the restrictions to the invariant
  subspace
  $\mathcal F^{(N)}\otimes\mathcal F^{(N)}\subset \mathcal F\otimes
  \mathcal F$ of well-defined operators
  $E,F,K\in\operatorname{End}(\mathcal F\otimes \mathcal F)$ defining
  a representation of $U_q\mathfrak{sl}_2$.
\end{prop}

\begin{proof} Since $\mathcal F^{(N)}$ is generated by $\vac^{(N)}$ as a
  module over the Clifford algebra $C_N$, $\tilde n_i$ is uniquely
  characterized by the commutation relations \eqref{e-tt} and the
  condition that $\tilde n_i\vac^{(N)}=0$.  Thus for $N<N'$ and
  $|i|>(N-1)/2$, $\tilde t_i$ acts as $1$ on
  $\mathcal F^{(N)}\subset \mathcal F^{(N')}$.  In particular for
  $|i|=(N+1)/2$, $\tilde K_i$ acts by 1 on the image of
  $\mathcal F^{(N)}$ in $\mathcal F^{(N+2)}$ and thus
  $K(N+2)\circ (i_N\otimes i_N)=(i_N\otimes i_N)\circ K(N)$. As for
  $E(N)$, we have similarly
  \[
    (i_N\otimes i_N)\circ\prod_{j=i+1}^{\frac{N-1}2}\tilde
    K_j=\prod_{j=i+1}^{\frac{N+1}2}\tilde K_j\circ (i_N\otimes i_N),
  \]
  since the last factor on the right-hand side acts by 1. Also
  $   b_i\otimes c_{-i}$ commutes with $i_N\otimes i_N$ for $|i|\leq \frac{N-1}2$
  and acts by zero on the image of $i_N\otimes i_N$
  if $|i|=(N+1)/2$.
  This is because for $|i|>(N-1)/2$, $b_i$ and $c_{-i}$ commute
  (up to sign) with $C_N$ and either $c_{-i}\vac^{({N+2})}$ or
  $b_i\vac^{({N+2})}$ vanishes.
  Thus $E(N+2)\circ i_N\otimes i_N=i_N\otimes i_N\circ E(N)$ and similarly
  for $F(N)$.
\end{proof}

We can formally write the limiting representation of $U_q\mathfrak{sl}_2$ on
$\mathcal F\otimes \mathcal F$ as
\begin{align*}
  E&=\sum_{i=-\infty}^{\infty} (b_i\otimes
  c_{-i})\prod_{j=i+1}^{\infty}\tilde K_j,
\quad
  F=-\sum_{i=-\infty}^{\infty} \prod_{j=-\infty}^{i-1}\tilde
  K_j(c_{-i}\otimes b_i)
  \\
  K&=\prod_{i=-\infty}^{\infty}\tilde K_i,\quad \tilde
  K_i=\tilde t_i\otimes \tilde t_i^{-1}
\end{align*}
with the remark that, when acting with these operators on any element of
$\mathcal F\otimes\mathcal F$, all but finitely many summands in the infinite
sums vanish and all but finitely many factors in the products are equal to 1.

\begin{theorem}\label{t-Fock}
  Let $E^{(j)}=E/[j]_q!$, $F^{(j)}=F/[j]_q!$, $(j=0,1,\dots)$.
  For every $m\in \mathbb Z$ the operator
  \[
    A_m(z)=\sum_{j=0}^\infty
    (-q)^{j}\frac{1-q^{|m|}z}{1-q^{2j+|m|}z}
    \begin{cases}
      E^{(j)}F^{(j+m)},& \text{if $m\geq0$,}
      \\
      E^{(j-m)}F^{(j)},& \text{if $m\leq 0$,}
     \end{cases}
   \]
   is a well-defined rational function with values in $\operatorname{End}(\mathcal F
   \otimes\mathcal F)$.
  The braiding matrix
  \[
    \check
    R_{k,k'}(z)= (-1)^{\min(k,k')}A_{k-k'}(z)
  \]
  maps $\mathcal F_k\otimes \mathcal F_{k'}$ to $\mathcal F_{k'}\otimes \mathcal F_k$
  and is a solution of the Yang--Baxter equation \eqref{e-3} and the inversion relation
  \eqref{e-4}.
\end{theorem}

\begin{proof}
  The operators $E$ and $F$ preserve the finite dimensional subspaces
  $\mathcal F^{(N)}\otimes\mathcal F^{(N)}$ and they are nilpotent there
  so that the sums defining $A_m$ reduce to finite sums. Moreover
  $\check R_{k,k'}(z)$ restricts to the braiding matrix of Theorem \ref{t-1}
  on these subspaces, so the Yang--Baxter equation and the inversion relation
  follow.
\end{proof}

.
\section{Quantum skew Howe duality}\label{s-3}
The classical skew-symmetric version of Howe duality \cite[Section 4]{Howe1992}
states that the natural actions of $\mathit{GL}_N$ and
$\mathit{GL}_M$ on $\bigwedge(\mathbb C^N\otimes\mathbb C^M)$ generate
each other's commutant and one has a decomposition into irreducibles
of $\mathit{GL}_N\times \mathit{GL}_M$
\[
  \bigwedge (\mathbb C^N\otimes\mathbb C^M)\cong
  \bigoplus_{\lambda}V^N_\lambda\otimes V^M_{\lambda^t}.
\]
The sum is over all Young diagrams $\lambda$ with at most $N$ rows
such that the transposed diagrams $\lambda^t$ has at most $M$ rows
(equivalently, the length of the first row of $\lambda$ is at most
$M$) and $V^N_\lambda$ denotes the irreducible representation of
$\mathit{GL}_N$ of highest weight $\lambda$. The following nice
pictorial description of the corresponding highest weight vectors is
due to Howe in {\it loc.~cit.}: write the tensor products of basis vectors
$v_{i,r}=v_i\otimes w_r$ $(i=1,\dots,N, r=1,\dots, M)$ in the boxes of an
$N\times M$ grid (the tensors $v_{1,r}$ go in the first row,
$v_{2,r}$ in the second row, etc.).   The Young diagrams
occuring in the decomposition fit in this rectangle. The highest
weight vector corresponding to $\lambda$ is obtained by taking the
wedge product of the basis vectors in the boxes of the Young diagram
$\lambda$. For example, if $\lambda=(3,1)$,
we take $v_{1,1}\wedge v_{1,2}\wedge v_{1,2}\wedge v_{2,1}$.
It is indeed of weight $\lambda$ for $\mathit{GL}_N$ and of
weight $\lambda^t$ for $\mathit{GL}_M$.

The quantum group version of Howe duality is known: in the analogous
case of the symmetric algebra it is discussed in \cite{Baumann1999},
\cite{ToledanoLaredo2002} and \cite{Zhang2002}.
The skew-symmetric case we use here is
explained in \cite{CautisKamnitzerLicata2010} (for $M=2$), \cite{LehrerZhangZhang2011} and
\cite{CautisKamnitzerMorrison2014}.  
Here we present
an equivalent variant based on the Clifford algebra action.

Let $V=\mathbb C^N$ with basis $v_1,\dots,v_N$ be the vector
representation of $U_q\mathfrak {gl}_N$ and $W=\mathbb C^M$ with basis
$w_1,\dots,w_M$ be the vector representation of
$U_q\mathfrak{gl}_M$. Then we have an isomorphism of graded
commutative algebras
\[
  \bigwedge(V\otimes W)\cong\bigwedge(V\oplus\cdots\oplus V) \to
  \left(\bigwedge V\right)^{\otimes M} =\bigwedge
  V\otimes\cdots\otimes \bigwedge V
\]
sending $v_i\otimes w_j$ to
$1\otimes\cdots\otimes 1\otimes v_i\otimes 1\otimes \cdots\otimes 1$
with $v_i$ placed in the $j$-th factor. The right-hand side is a
tensor product of $U_q\mathfrak{gl}_N$-modules and thus
$U_q\mathfrak{gl}_N$ acts on it by the iterated coproduct. Similarly
we have an isomorphism
$\bigwedge(V\otimes W)\cong \bigwedge W\otimes\cdots \otimes\bigwedge
W$ sending $v_i\otimes w_j$ to
$1\otimes \cdots\otimes 1\otimes w_j\otimes 1\otimes \cdots \otimes 1$
with $i$-th factor $w_j$.  Here we let $U_q\mathfrak{gl}_M$ act via
the {\em opposite} coproduct $\Delta'=\sigma\circ\Delta$, with
$\sigma(a\otimes b)=b\otimes a$.

\begin{prop}\label{p-5}
  The actions of $(U_q\mathfrak{gl}_N,\Delta)$ and
  $(U_q\mathfrak{gl}_M,\Delta')$ on $\bigwedge(V\otimes W)$ pulled
  back by the isomorphisms
  \[
    \left(\bigwedge W\right)^{\otimes N}\leftarrow\bigwedge(V\otimes W)
    \rightarrow\left(\bigwedge V\right)^{\otimes M}
  \]
  commute. If $q$ is not a root of unity the images of
  $U_q\mathfrak{gl}_N$ and $U_q\mathfrak {gl}_M$ in
  $\operatorname{End}\bigwedge(V\otimes W)$ are commutants of each
  other and we have a decomposition
  \begin{equation}\label{e-dec}
    \bigwedge(V\otimes W)\cong\bigoplus_\lambda V^N_\lambda\otimes V^M_{\lambda^t}
  \end{equation}
  into simple $U_q\mathfrak{gl}_N\otimes U_q\mathfrak{gl}_M$-modules,
  where $\lambda$ runs over Young diagrams with at most $N$ rows and
  at most $M$ columns.
\end{prop}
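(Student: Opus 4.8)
The plan is to realize both quantum group actions by explicit operators on the Fock space $\bigwedge(V\otimes W)$ coming from a Clifford algebra, reduce the commutativity statement to Lemma~\ref{l-1} by coassociativity, and then obtain the decomposition and the double-centralizer property from semisimplicity together with classical skew Howe duality. To that end, let $C_{NM}$ be the Clifford algebra of $(V\otimes W)\oplus(V\otimes W)^*$ with the form $u\oplus\beta\mapsto\beta(u)$, generated by $\psi_{a,r},\psi^*_{a,r}$ ($a\in\{1,\dots,N\}$, $r\in\{1,\dots,M\}$) with the relations \eqref{e-1}, and let $C_{NM}(q)$ be its enlargement by commuting elements $t_{a,r}=q^{n_{a,r}}$, $n_{a,r}=\psi^*_{a,r}\psi_{a,r}$, as in \eqref{e-2}; its Fock module is $\bigwedge(V\otimes W)$, with $\psi^*_{a,r}$ acting by left multiplication by $v_a\otimes w_r$. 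Under $\bigwedge(V\otimes W)\cong(\bigwedge V)^{\otimes M}$ the element $\psi^*_{a,r}$ is $\psi_a^*$ acting in the $r$-th tensor factor, dressed with the Koszul sign for the parity of the preceding factors, and likewise under $\bigwedge(V\otimes W)\cong(\bigwedge W)^{\otimes N}$. Using Proposition~\ref{p-1} and the (opposite) coproduct formulas one computes the images of the Chevalley generators:
\[
  e^N_a\longmapsto\sum_{r=1}^M\Big(\prod_{s<r}q^{\,n_{a,s}-n_{a+1,s}}\Big)\psi^*_{a,r}\psi_{a+1,r},\qquad
  e^M_r\longmapsto\sum_{a=1}^N\psi^*_{a,r}\psi_{a,r+1}\prod_{b>a}q^{\,n_{b,r}-n_{b,r+1}},
\]
with analogous formulas for the $f$'s and $t$'s (the Koszul signs cancelling since each $\psi^*\psi$ is even). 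The crucial point is that the image of a generator of $U_q\gl_M$ involves only the Clifford generators with second index $r$ or $r+1$, so in the picture $(\bigwedge V)^{\otimes M}$ it acts only on the $r$-th and $(r+1)$-st factors, where it coincides with the operator $E$, resp.\ $F$, of \eqref{e-EFK}; the image of $t^M_b$ is $\prod_i t_i$ acting in the $b$-th factor.

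Since the iterated opposite coproduct composed with the homomorphism of Proposition~\ref{p-1} is an algebra map, the operators attached to $U_q\gl_M$ automatically satisfy its defining relations, so the remaining point is that they commute with the operators attached to $U_q\gl_N$. Fix $x\in U_q\gl_N$. By coassociativity, $\Delta^{(M)}(x)$ acts on the $r$-th and $(r+1)$-st factors of $(\bigwedge V)^{\otimes M}$ through $\Delta(\,\cdot\,)$ of Sweedler components and on the other factors through further components, all of which are even. Since the operator attached to $e^M_r$ is the even operator $E$ acting in factors $r,r+1$ and the identity elsewhere, the commutator collapses to a sum of terms $(\cdots)\otimes[E,\Delta(y)]\otimes(\cdots)$ with $y\in U_q\gl_N$, and each bracket vanishes by Lemma~\ref{l-1}; the same argument applies with $F$, and with $\prod_i t_i$ in one factor, which is central in $U_q\gl_N$. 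The genuinely delicate point throughout is the bookkeeping of Koszul signs, both in the formulas above and in this collapse, and that is the step I expect to demand the most care; and if one does not grant Lemma~\ref{l-1}, its $M=2$ case --- a direct computation in $C_{2N}(q)$ from \eqref{e-1} and \eqref{e-2} --- is the computational heart of the whole argument.

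Finally, $\bigwedge(V\otimes W)$ is finite dimensional of type I, hence semisimple over $U_q\gl_N$ and over $U_q\gl_M$ (as $q$ is not a root of unity, \cite{ChariPressley1994}); the two actions commuting, it is a semisimple $U_q\gl_N\otimes U_q\gl_M$-module, a direct sum of copies of the simples $V^N_\lambda\otimes V^M_\mu$. The multiplicities are determined by the $(U_q\gl_N\times U_q\gl_M)$-character, which is independent of $q$ and equals the classical one $\prod_{a,r}(1+x_ay_r)=\sum_\lambda s_\lambda(x)\,s_{\lambda^t}(y)$ --- the dual Cauchy identity, equivalently classical skew Howe duality \cite{Howe1992}; by linear independence of irreducible characters this forces \eqref{e-dec}, with $\lambda$ ranging over Young diagrams inside the $N\times M$ rectangle. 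The decomposition being multiplicity free with $\lambda\mapsto\lambda^t$ injective, the Jacobson density theorem shows that the image of $U_q\gl_N$ in $\operatorname{End}\bigwedge(V\otimes W)$ is $\bigoplus_\lambda\operatorname{End}(V^N_\lambda)\otimes 1$, whose commutant is $\bigoplus_\lambda 1\otimes\operatorname{End}(V^M_{\lambda^t})$, which by density again is the image of $U_q\gl_M$; the reverse statement is symmetric, so the two images are each other's commutants.
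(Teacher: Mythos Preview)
Your argument is correct and follows the same overall architecture as the paper: realize both actions through the Clifford algebra $C_{NM}(q)$, verify commutativity of the two actions, and then deduce the decomposition and double-commutant property from the classical case via semisimplicity at generic $q$.

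There is one genuine, if minor, difference worth recording. For the commutativity step the paper does not reduce to $M=2$: it computes $[e_i,E_r]$ directly in $C_{NM}(q)$ for arbitrary $M$ (this is Lemma~\ref{l-2}), isolating the four Clifford terms that contribute and checking they cancel. Your route---observe that $E_r$ acts only on tensor slots $r,r+1$ as the operator $E$ of \eqref{e-EFK}, use coassociativity to write $\Delta^{(M-1)}(x)$ with a $\Delta$ in those two slots, and invoke Lemma~\ref{l-1}---is cleaner and makes transparent why the general $M$ case contains no new content beyond $M=2$; since Lemma~\ref{l-1} is in any case proved in the paper \emph{via} the $M=2$ instance of Lemma~\ref{l-2}, the computational core is the same. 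Your treatment of the decomposition and the commutant statement (characters plus dual Cauchy plus Jacobson density) is more explicit than the paper's, which simply asserts that the classical decomposition deforms and that Howe's highest weight vectors persist; both are standard for $q$ not a root of unity. The one place to be careful, as you yourself flag, is the Koszul bookkeeping in identifying the image of $F_r$ with the $F$ of \eqref{e-EFK}: the minus sign there is exactly the residual Koszul sign, and the paper notes this explicitly.
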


For the proof we view $\bigwedge(V\otimes W)$ as a representation of
the Clifford algebra of $V\otimes W\oplus (V\otimes W)^*$. This algebra has
generators $\psi^*_{i,r},\psi_{i,r}$ corresponding to the tensor basis
$v_i\otimes w_r$ and its dual basis. As in Section \ref{ss-2.3} we introduce the
operators
$t_{i,r}^{\pm1}=q^{\pm1}\psi_{i,r}^*\psi_{i,r}+\psi_{i,r}\psi_{i,r}^*$.
Via the isomorphism to $(\bigwedge V)^{\otimes M}$, $\psi_{i,r}^*$
acts as
$\mathrm{id}^{\otimes(r-1)}\otimes \psi_i^*\otimes\mathrm{id}^{\otimes
  (N-r)}$ and $\psi_{i,r}$ acts as
$\mathrm{id}^{\otimes(r-1)}\otimes \psi_i\otimes\mathrm{id}^{\otimes
  (N-r)}$. Thus the pull-back of the action of $U_q\mathfrak{gl}_N$ on
$\bigwedge(V\otimes W)$ is given on generators by
\begin{align*}
e_i&\mapsto\sum_{r=1}^M\prod_{s=1}^{r-1} k_{i,s}
\psi^*_{i,r}\psi_{i+1,r}, \quad k_{i,s}=t_{i,s}t_{i+1,s}^{-1}\\
f_i&\mapsto\sum_{r=1}^M
     \psi^*_{i+1,r}\psi_{i,r}\prod_{s={r+1}}^M k_{i,s}^{-1}
  \\
t_i^{\pm1}&\mapsto \prod_{r=1}^Mt_{i,r}^{\pm1}.
\end{align*}
Similary we can pull-back the action of $U_q\mathfrak{gl}_M$ on
$(\bigwedge W)^{\otimes N}$ with the opposite coproduct. The action
of generators (denoted by capital letters to distinguish them)
\begin{align*}
  E_r&\mapsto\sum_{i=1}^N\psi^*_{i,r}\psi_{i,r+1}
       \prod_{j=i+1}^{N} K_{j,r},\quad K_{j,r}=t_{j,r}t_{j,r+1}^{-1},
  \\
F_r&\mapsto\sum_{i=1}^N
\prod_{j=1}^{i-1} K_{j,r}^{-1}\psi^*_{i,r+1}\psi_{i,r},
\\
T_r^{\pm1}&\mapsto \prod_{i=1}^Nt_{i,r}^{\pm1}.
\end{align*}
In the case $M=2$ this reduces to the formulae of the previous section, by remembering the sign rule
in the definition of $F$. By construction, we obtain actions of $U_q\gl_N$ and $U_q\gl_M$ and we need
to check that they commute:
\begin{lemma}\label{l-2}
  The actions of $U_q\gl_N$ and $U_q\gl_M$ on $\bigwedge(V\otimes W)$
  commute.
\end{lemma}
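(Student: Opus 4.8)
The plan is to verify directly, from the explicit formulas above for the two actions in terms of the Clifford generators $\psi_{i,r},\psi^*_{i,r},t_{i,r}$, that each of the algebra generators $e_i,f_i,t_i^{\pm1}$ of $U_q\gl_N$ commutes with each of the generators $E_r,F_r,T_r^{\pm1}$ of $U_q\gl_M$; this clearly suffices. First I would dispose of the brackets involving a Cartan generator, which are immediate: $t_i=\prod_{r}t_{i,r}$ is a monomial in the $t_{j,s}$, and in every summand of $E_r$ (respectively $F_r$) the two ``fermionic'' factors carry opposite $t_i$-weight while the factors $K_{j,r}^{\pm1}$ are themselves $t$-monomials, so $t_i$ commutes with $E_r$ and $F_r$; likewise $T_r=\prod_i t_{i,r}$ commutes with $e_i,f_i$, and $t_i$ with $T_r$. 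It then remains to treat the four brackets $[e_i,E_r]$, $[e_i,F_r]$, $[f_i,E_r]$, $[f_i,F_r]$, and I would spell out $[e_i,E_r]$, the other three being entirely analogous.

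For this I would exploit a locality property. Write $e_i=\sum_{r'}a_{r'}$ and $E_r=\sum_{i'}b_{i'}$ for the summands in the displayed formulas, so that $a_{r'}$ involves only the Clifford generators attached to rows $i,i+1$ and columns $1,\dots,r'$, while $b_{i'}$ involves only those attached to columns $r,r+1$ and rows $i',\dots,N$. The first claim is that $[a_{r'},b_{i'}]=0$ unless $r'\in\{r,r+1\}$ and $i'\in\{i,i+1\}$: if $r'<r$ or $i'>i+1$, the two (even) operators involve disjoint sets of generators and therefore commute; if $r'\ge r+2$, then $a_{r'}$ carries the factors $k_{i,r}$ and $k_{i,r+1}$, and the power of $q$ produced when $k_{i,r}$ is moved past $\psi^*_{i',r}$ is cancelled by the one produced when $k_{i,r+1}$ is moved past $\psi_{i',r+1}$; symmetrically, if $i'<i$ the powers of $q$ coming from $K_{i,r}$ and $K_{i+1,r}$ cancel. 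Hence $[e_i,E_r]=\sum_{r'\in\{r,r+1\}}\sum_{i'\in\{i,i+1\}}[a_{r'},b_{i'}]$. Next I would observe that the common left factor $\prod_{s<r}k_{i,s}$ of $a_r$ and $a_{r+1}$ involves only columns $<r$ and thus commutes with all of $b_i,b_{i+1}$, and that the common right factor $\prod_{j>i+1}K_{j,r}$ of $b_i$ and $b_{i+1}$ involves only rows $>i+1$ and commutes with all of $a_r,a_{r+1}$; factoring both out, $[e_i,E_r]$ becomes a $t$-monomial times the bracket of $e^{\mathrm{loc}}=\psi^*_{i,r}\psi_{i+1,r}+k_{i,r}\psi^*_{i,r+1}\psi_{i+1,r+1}$ with $E^{\mathrm{loc}}=\psi^*_{i,r}\psi_{i,r+1}K_{i+1,r}+\psi^*_{i+1,r}\psi_{i+1,r+1}$, both of which involve only the Clifford generators on the $2\times2$ block of rows $\{i,i+1\}$ and columns $\{r,r+1\}$. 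After relabelling, $[e^{\mathrm{loc}},E^{\mathrm{loc}}]=0$ is exactly the $N=M=2$ instance of the lemma for the generators $e_1,E_1$.

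Everything thus reduces to the case $N=M=2$, which is a finite computation: expanding $e_1E_1$ and $E_1e_1$ into four products of two quadratic Clifford monomials each, two of the four terms in each expansion vanish by \eqref{e-1} (a squared $\psi$ or $\psi^*$ appears), and the relations \eqref{e-1}--\eqref{e-2} show that the two surviving terms in the first expansion match the two in the second. The main obstacle I anticipate is purely the bookkeeping of signs and powers of $q$ --- both in this base case and in checking the two cancellations claimed above for the non-adjacent summands; conceptually there is nothing to it beyond the statement that moving fermions between rows and moving them between columns are independent operations, the $q$-weights built into the two coproducts being arranged precisely so that this independence persists after quantization.
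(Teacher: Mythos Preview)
Your approach is essentially the same as the paper's: dispose of the Cartan generators, then reduce $[e_i,E_r]$ (and analogously the other three brackets) to a local $2\times2$ computation by observing that only the summands with $r'\in\{r,r+1\}$ and $i'\in\{i,i+1\}$ contribute, and that the common prefix $\prod_{s<r}k_{i,s}$ and suffix $\prod_{j>i+1}K_{j,r}$ can be factored out. The paper states this reduction more tersely (``the only terms \dots\ that contribute non-trivially are'') and then writes out the local bracket explicitly; your locality argument spells out why, which is fine.

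One small correction to your description of the final step: in the $2\times2$ computation the two surviving terms of $e^{\mathrm{loc}}E^{\mathrm{loc}}$ do \emph{not} match the two surviving terms of $E^{\mathrm{loc}}e^{\mathrm{loc}}$ pairwise. Rather, each of the two nonvanishing cross-commutators contributes a single monomial (up to $t$-factors), namely $\psi^*_{i,r}\psi_{i+1,r+1}$ and $\psi_{i+1,r+1}\psi^*_{i,r}$, and it is the anticommutation relation $\psi^*_{i,r}\psi_{i+1,r+1}+\psi_{i+1,r+1}\psi^*_{i,r}=0$ that kills their sum, after the $t$-factors have been absorbed using \eqref{e-2}. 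This is exactly what the paper's displayed computation shows. So the cancellation is between the two brackets, not within each.
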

\begin{proof} It is clear that $t_i^{\pm1}$ commutes with the action
  of $U_q\gl_M$ and that $T_r^{\pm1}$ commutes with the action of
  $U_q\gl_N$.  Let us check that $[e_i,E_r]=0$. The cases of the other
  generators are dealt with in the same way.
  The only terms in the sum
  defining $e_i$ and $E_r$ that contribute non-trivially to the commutator  are
  \begin{align*}
    [e_i,E_r]&=\prod_{s<r}k_{i,s}  [
               \psi_{i,r}^*\psi_{i+1,r}+k_{i,r}\psi_{i,r+1}^*\psi_{i+1,r+1},
               \\
    &\quad
      \psi_{i,r}^*\psi_{i,r+1}K_{i+1,r}+
      \psi_{i+1,r}^*\psi_{i+1,r+1}] \prod_{j>i+1}K_{j,r}.
  \end{align*}
  The bracket on the right-hand side is
  \begin{align*}
    &[\psi_{i,r}^*\psi_{i+1,r},      \psi_{i+1,r}^*\psi_{i+1,r+1}]
    +[k_{i,r}\psi_{i,r+1}^*\psi_{i+1,r+1},      \psi_{i,r}^*\psi_{i,r+1}K_{i+1,r}]
    \\
    &=\psi_{i,r}^*\psi_{i+1,r+1}
      +k_{i,r}\psi_{i+1,r+1}\psi_{i,r}^*K_{i+1,r}
    \\
    &=\psi_{i,r}^*\psi_{i+1,r+1}+t_{i,r}\psi_{i+1,r+1}\psi_{i,r}^*t_{i+1,r+1}^{-1}
  \end{align*}
  We have the relations $t_{i,r}\psi_{i,r}^*=q\psi_{i,r}^*$ and
  $\psi_{i+1,r+1}t_{i+1,r+1}^{-1}=q^{-1}\psi_{i+1,r+1}$ and we are left
  with $\psi_{i,r}^*\psi_{i+1,r+1}+\psi_{i+1,r+1}\psi_{i,r}^*=0$.
\end{proof}
To complete the proof of Proposition \ref{p-5}, we observe that for
$q$ not a root of unity the decomposition into irreducible
representations in the classical case deforms to the quantum group
case.  In fact the highest weight vectors generating irreducible
$U_q\gl_N\otimes U_q\gl_M$-modules are still given by Howe's
construction.

\begin{proof}[Proof of Theorem \ref{t-1}]
  We use the formula of Date and Okado \cite{DateOkado1994} for the action of the $R$-matrix on highest weight vectors
  of irreducible $U_q\gl_N$-subrepresentations in
  $\bigwedge^kV\otimes \bigwedge^{k'}V$. The decomposition into
  irreducibles is best understood in terms of Howe duality: we consider
  the case $M=2$ of the decomposition \eqref{e-dec}
  of Proposition \ref{p-5}. The left-hand side is
  \[
    \textstyle{
    \bigwedge(V\otimes\mathbb C^2)=\bigwedge(V\oplus V)=
    \bigwedge V\otimes\bigwedge V=\oplus_{k,k'=0}^N\bigwedge^kV\otimes
    \bigwedge^{k'} V
    }
  \]
  and the summand labeled by $(k,k')\in\{0,\dots,N\}^2$ is the weight
  space of weight $(k,k')$ for the $U_q\gl_2$-action. The right-hand
  side is a sum over Young diagram $\lambda$ with at most two columns
  and at most $N$ rows.  Thus
  $\lambda=(2,\dots,2,1,\dots,1)=2^{\ell'}1^{\ell-\ell'}$ ($\ell'$
  rows of length 2, $\ell-\ell'$ rows of length 1) and
  $\lambda^t=(\ell,\ell')$ with $N\geq \ell\geq\ell'\geq0$.  The
  irreducible $U_q\gl_2$-module $V^2_{(\ell,\ell')}$ has a non-trivial
  weight space of weight $(k,k')$ if and only if $k+k'=\ell+\ell'$
  for some $s\geq0$ and $\ell'-\ell\leq k-k'\leq \ell-\ell'$.
  In this case the weight space is one-dimensional.
  If $k\geq k'$ the corresponding highest weights are
  \[
    (\ell,\ell')=(k,k'),(k+1,k'-1),\dots,(k+n,k'-n),
  \]
  where $n=\min(k',N-k)$, whereas if $k\leq k'$ they are
  \[
    (\ell,\ell')=(k',k),(k'+1,k-1),\ldots,(k'+n,k-n),
  \]
  where $n=\min(k,N-k')$. In both cases we can write
  \[
    n=\min(k,k',N-k,N-k').
  \]
  With this notation, the weight $(k,k')$ subspace in the decomposition \eqref{e-dec}
  gives the decomposition
  \[
  {\textstyle  \bigwedge^{k}V\otimes\bigwedge^{k'}V}\cong\oplus_{s=0}^{n}
    V^N_{(\overline k+s,\underline k-s)^t},\quad
    \text{with $\overline k=\max(k,k')$ and
      $\underline k=\min(k,k')$},
  \]
  as an $U_q\gl_N$-module. The $s=0$ component is generated by the
  highest weight vector $v^{(\overline{k},\underline{k})}_0=v^{\overline{k}}
  \otimes v^{\underline{k}}$, see \eqref{e-hw},  and
  the highest weight vector $v^{(k,k')}_s$ of $V^N_{(\overline k+s,\underline k-s)^t}$
  is proportional to $F^{(s)}v_0^{(k+s,k'-s)}$ if $k\geq k'$ and to
  $F^{(k'-k+s)}v_0^{(k'+s,k-s)}$ if $k\leq k'$.
  We normalize it as in \cite{DateOkado1994}
 by fixing the coefficient of
  the basis vector $\psi^*_k\cdots\psi^*_1|0\rangle$ in the first factor. Namely,
  \[
    v_s^{(k,k')}=\psi_k^*\cdots\psi_1^*|0\rangle\otimes\psi^*_{\overline
      k+s}\cdots \psi^*_{\overline k+1}\psi^*_{\underline
      k-s}\cdots\psi_1^*|0\rangle+\cdots.
  \]
  Let
  \[
    \textstyle Q_s\colon \bigwedge^k
    V\otimes\bigwedge^{k'}V\to\bigwedge^{k'} V\otimes\bigwedge^{k}V
  \]
  be the unique $U_q\gl_N$-linear map sending $v^{(k,k')}_s$ to the
  vector of the same weight $v^{(k',k)}_{s'}$, $s'=k-k'+s$ and vanishing on the
  other irreducible components. In our
  notation and with the normalization of Theorem \ref{t-1}, the formula of
  \cite{DateOkado1994}, obtained by the fusion procedure, is
  \begin{equation}\label{e-DateOkado}
    \check R_{k,k'}(z)
    =\sum_{s=0}^{n}(-1)^{kk'+(k-k')s}\prod_{j=1}^s\frac{z-q^{|k-k'|+2j}}{1-z
      q^{|k-k'|+2j}}Q_s.
  \end{equation}
  To deduce a formula in terms of the $U_q\gl_2$-action we need
  to compare the normalizations of the highest weight vectors.
  If $k\geq k'$ a straightforward computation using the formula for $F$
  in \eqref{e-EFK}  shows that
  \[
    v^{(k,k')}_s=(-1)^{sk}q^{ss'}F^{(s)}v^{(k+s,k'-s)}_0,
\quad
    v^{(k',k)}_{s'}=(-1)^{s'k'}q^{ss'}F^{(s')}v^{(k+s,k'-s)}_0.
  \]
  It follows that for $k\geq k'$
  \[
    \check R_{k,k'}(z)F^{(s)}v_0^{(k+s,k'-s)}
    =(-1)^{k'}\prod_{j=1}^s\frac{z-q^{|k-k'|+2j}}{1-z
      q^{|k-k'|+2j}}F^{(s')}v_0^{(k+s,k'-s)}
  \]
  To prove the formula for $\check R$ given in the Theorem
  we need to compute the action of $A_{k-k'}(z)$ on the irreducible
  representation generated by the highest weight vector
  $v^{(k+s,k'-s)}_0$ of $\mathfrak{sl}_2$-weight
  $\ell=k-k'+2s\in\mathbb Z_{\geq0}$.  Comparing with the formula for
  $A_m(z)$ from Appendix \ref{a-1}  (see Lemma \ref{l-A1}) we see that
  $\check R_{k,k'}(z)$ acts by the same factor on $F^{(s)}v_0^{(k+s,k'-s)}$
  up to the sign $(-1)^{k'}=(-1)^{\min(k,k')}$.

  If $k\leq k'$ the reasoning is similar: in this case the
  highest weight vector $v^{(k,k')}_s$ is proportional
  to $F^{(s')}v_0^{(k'+s,k-s)}$ with $s'=k'-k+s$ and
  \[
    v^{(k,k')}_s=(-1)^{sk'}q^{ss'}F^{(s')}v^{(k'+s,k-s)}_0, \quad
    v^{(k',k)}_{s'}=(-1)^{s'k}q^{ss'}F^{(s)}v^{(k'+s,k-s)}_0.
  \]
  From this we obtain
  \[
    \check R_{k,k'}(z)F^{(s')}v_0^{(k'+s,k-s)}
    =(-1)^{k}\prod_{j=1}^s\frac{z-q^{|k-k'|+2j}}{1-z
      q^{|k-k'|+2j}}F^{(s)}v_0^{(k'+s,k-s)},
  \]
  which is the same as the action of $A_{k-k'}(z)$ up to the sign
  $(-1)^{k}=(-1)^{\min(k,k')}$.
\end{proof}

\section{Dynamical Weyl group}\label{s-4}
\subsection{Braiding matrices as generators of the Weyl group}
In Theorem \ref{t-1} the braiding matrix for the tensor product of two
representations of $U_qL\mathfrak{gl}_N$ obtained from exterior powers
of the vector representation of $U_q\mathfrak{gl}_N$ are expressed
in terms of the action of $U_q\mathfrak{gl}_2$ via Howe duality.
In this section we consider braiding matrices on the tensor product of $M$
exterior powers and notice that they are correspondingly expressed
in terms of the action of $U_q\mathfrak{gl}_M$ via the simple root embeddings of
$U_q\gl_2$.
The braid and inversion relations of the $\check R$-matrices acting on
a $M$-fold tensor product  is then translated by Howe duality to braid and inversion
relations in $U_q\gl_M$.
The result is then that these are relations of the
dynamical Weyl group.

Recall that the (ordinary) Weyl group of $\mathit{GL}_M$ is
the symmetric group $S_M$. It acts on $\mathbb C^M$ and
the weight lattice $\mathbb Z^M$ by permutations. It is generated by the transpositions $s_i\in S_M$ of
$i$ and $i+1$ ($i=1,\dots,M-1$) with the relations $s_i^2=1$, $s_is_j=s_js_i$ for
$|i-j|\geq 2$ and $s_is_{i+1}s_i=s_{i+1}s_is_{i+1}$ for
$1\leq i\leq M-2$; the dynamical Weyl group relations are a version
with parameters.

Let us define formal series $A_{i,\mu}(z)$ of rational functions of
$M$-variables $z=(z_1,\dots,z_M)$ with values in
$U_q{\mathfrak{sl}}_M\subset U_q\gl_M$ for $1\leq i\leq M-1$ and
$\mu\in P_M=\mathbb Z^M$:
\[
  A_{i,\mu}(z)=\sum_{j=0}^{\infty}
  (-q)^{j}
  \frac{z_{i+1}-q^{|\mu_i-\mu_{i+1}|}z_i}
  {z_{i+1}-q^{2j+|\mu_i-\mu_{i+1}|}z_i}
  E_i^{(j+(\mu_{i+1}-\mu_i)_+)}F_i^{(j+(\mu_i-\mu_{i+1})_+)}.
\]
Here we set $(x)_+=\max(x,0)$ and, as before, we denote the generators
by capital letters to distinguish them from the generators of
$U_q\gl_N$. For any finite dimensional $U_q\gl_N$-module $U$ of type I
only finitely many terms contribute non-trivially to the sum and
$A_{i,\mu}(z)$ maps the weight space $U[\mu]$ of weight $\mu$ to
$U[s_i\mu]$.  Notice that $A_{i,\mu}(z)$ depends on $\mu$ through its class
in the weight lattice $P_M=\mathbb Z^M/\mathbb Z(1,\dots,1)$ of $\mathfrak{sl}_M$.
For $M=2$, the map $\mathbb Z^2\to\mathbb Z$, $\mu\mapsto \mu_1-\mu_2$ induces
a bijection $P_2\cong \mathbb Z$. With this identification, $A_{1,m}(z_1,z_2)=A_m(z_1/z_2)$ where
\begin{equation}\label{e-5}
  A_m(z)= \sum_{j\geq0} (-q)^{j}\frac{1-q^{|m|}z}{1-q^{2j+|m|}z}
  \begin{cases}
    E^{(j)}F^{(j+m)},& \text{if $m\geq 0$,}
    \\
    E^{(j+|m|)}F^{(j)},& \text{if $m\leq 0$.}  \end{cases}
\end{equation}
The general series $A_{i,\mu}(z)$ reduces to this case via the
embedding $j_i\colon U_q\mathfrak{sl}_2\hookrightarrow
U_q\mathfrak{sl}_M$ sending $E,F,K^{\pm1}$ to $E_i,F_i,K_i^{\pm1}$:
\[ A_{i,\mu}(z)=j_i\left(A_{\mu(h_i)}(z_i/z_{i+1})\right).
\] By construction, see Theorem \ref{t-1}, the action of
$A_{i,\mu}(z)$ on the tensor product of exterior powers
$\bigwedge^{\mu_i}V(z_i)$ via Howe duality is the action of the
braiding matrix $\check R_{\mu_i,\mu_{i+1}}(z_i/z_{i+1})$ on the $i$th
and $i+1$st factors (up to an irrelevant sign). Note that this tensor
product is the weight space of weight $\mu$ for the action of
$U_q\gl_M$ on the tensor product of $M$ copies of $\bigwedge V$.

\begin{theorem}\label{t-2} Let $U$ be any finite dimensional
$U_q\gl_M$-module of type I. Then the restriction of $A_{i,\mu}(z)$ to
the weight space $U[\mu]$ of weight $\mu$ obeys the following
relations.
  \begin{enumerate}
   \item[(i)] For $1\leq i,j\leq M-1$ such that $|i-j|\geq 2$,
     \[ A_{i,s_i\mu}(s_jz)A_{j,\mu}(z) =
A_{j,s_j\mu}(s_iz)A_{i,\mu}(z),
     \] in $\operatorname{Hom}(U[\mu],U[s_is_j\mu])$.
    \item[(ii)] For all $1\leq i\leq M-1$,
      \[ A_{i,s_i\mu}(s_iz)A_{i,\mu}(z) =\mathrm{id}
      \] in $\operatorname{End}(U[\mu])$.
   \item[(iii)] For $1\leq i\leq M-2$,
     \begin{align*} A_{i,s_{i+1}s_i\mu}&(s_{i+1}s_iz)
A_{i+1,s_i\mu}(s_iz)A_{i,\mu}(z) \\ &=
A_{i+1,s_is_{i+1}\mu}(s_is_{i+1}z)
A_{i,s_{i+1}\mu}(s_{i+1}z)A_{i+1,\mu}(z),
     \end{align*} in $\operatorname{Hom}(U[\mu],U[s_is_{i+1}s_i\mu])$.
   \end{enumerate}
 \end{theorem}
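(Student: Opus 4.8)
The strategy is to reduce each of the three relations to the corresponding braiding/inversion relation for the $\check R$-matrices of exterior powers proved (or stated) in Section~\ref{s-2}, using the quantum skew Howe duality of Proposition~\ref{p-5}. The key point is that the operators $A_{i,\mu}(z)$ are \emph{universal}: they are given by a fixed element of $U_q\mathfrak{sl}_M$ (depending only on $i$, the spectral parameters, and the class of $\mu$), so it suffices to verify the three identities when $U$ ranges over a family of modules large enough to separate elements of $U_q\mathfrak{gl}_M$. Concretely, I would take $U=\left(\bigwedge V\right)^{\otimes M}$ for $V=\mathbb C^N$ with $N$ arbitrarily large; by Proposition~\ref{p-5} the image of $U_q\mathfrak{gl}_M$ in $\operatorname{End}\bigwedge(V\otimes W)$ is the full commutant of $U_q\mathfrak{gl}_N$, and as $N\to\infty$ these modules detect every element of $U_q\mathfrak{gl}_M$ acting on any fixed finite-dimensional type~I module. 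Hence an operator identity in $U_q\mathfrak{gl}_M$ that holds on all such $U$ holds universally, and therefore on every finite-dimensional type~I module.

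The second step is the translation dictionary. On $U=\left(\bigwedge V\right)^{\otimes M}$ the weight space $U[\mu]$ is exactly $\bigwedge^{\mu_1}V\otimes\cdots\otimes\bigwedge^{\mu_M}V$, and by the remark following \eqref{e-5} (which is just Theorem~\ref{t-1} with $M=2$ promoted via the simple-root embedding $j_i$) the operator $A_{i,\mu}(z)$ acts on this weight space as $\pm\,\mathrm{id}^{\otimes(i-1)}\otimes\check R_{\mu_i,\mu_{i+1}}(z_i/z_{i+1})\otimes\mathrm{id}^{\otimes(M-i-1)}$. I would first pin down the sign: by Theorem~\ref{t-1} the sign is $(-1)^{\min(\mu_i,\mu_{i+1})}$, and one checks that in each of the three relations the product of signs on the two sides agrees (this is a short bookkeeping exercise, since $\min(\mu_i,\mu_{i+1})$ is symmetric under the swap induced by $s_i$ and each factor appears the same number of times on both sides; the incoming and outgoing factors are permuted consistently). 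Once the signs cancel, relation~(ii) is literally the inversion relation \eqref{e-4} $\check R_{k'k}(z^{-1})\check R_{kk'}(z)=\mathrm{id}$ applied to the $i$th pair, relation~(i) is the trivial commutation of braidings acting on disjoint pairs of tensor factors, and relation~(iii) is the Yang--Baxter/braid relation \eqref{e-3} for the triple of factors $i,i+1,i+2$. The bookkeeping needed here is to confirm that the spectral-parameter substitutions $z\mapsto s_iz$, $z\mapsto s_{i+1}s_iz$, etc., match the shuffling of the $z_j$'s built into the $S_M$-action on $U$-valued functions described in the introduction; this is precisely why $A_{i,s_i\mu}$ is evaluated at $s_iz$ rather than at $z$.

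The main obstacle I anticipate is the sign and normalization bookkeeping in the reduction, not any deep structural point: one must be careful that the \emph{graded} flip $P$ used to pass from $\check R$ to $R$ and the sign rule $(f\otimes g)(v\otimes w)=(-1)^{|g||v|}fv\otimes gw$ interact correctly with the simple-root embedding $j_i$ when $i$ is not the first index, since then $\check R_{\mu_i,\mu_{i+1}}$ acts in the middle of a longer tensor product and picks up Koszul signs from the factors to its left. I would handle this by working throughout in the ``capital-letter'' $U_q\mathfrak{gl}_M$-picture, where $E_i,F_i$ are the manifestly even operators of \eqref{e-EFK}, so that $A_{i,\mu}(z)\in U_q\mathfrak{sl}_M$ is \emph{even} and its action commutes with the Koszul signs; this localizes all the sign subtleties into the single already-verified statement of Theorem~\ref{t-1} and reduces everything else to the sign-free braid and inversion relations \eqref{e-3}, \eqref{e-4}. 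A final remark: once (i)--(iii) are established for $U=(\bigwedge V)^{\otimes M}$, the universality argument of the first paragraph upgrades them to arbitrary finite-dimensional type~I $U_q\mathfrak{gl}_M$-modules, completing the proof; this last upgrade uses that the $A_{i,\mu}(z)$ are given by fixed elements of $U_q\mathfrak{sl}_M$ and that such elements are determined by their action on the modules $(\bigwedge V)^{\otimes M}$, $N\gg 0$, which follows from semisimplicity of the category of type~I modules and the fact that every irreducible appears in some such tensor product.
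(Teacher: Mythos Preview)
Your proposal is correct and follows essentially the same route as the paper: verify the identities on $(\bigwedge V)^{\otimes M}$ by translating $A_{i,\mu}(z)$ into $\check R$-matrices via Theorem~\ref{t-1}, so that (i), (ii), (iii) become, respectively, commutation on disjoint factors, the inversion relation~\eqref{e-4}, and the braid relation~\eqref{e-3}; then pass to arbitrary type~I modules using complete reducibility and the fact (Proposition~\ref{p-5}) that every irreducible $U_q\mathfrak{sl}_M$-module appears for $N$ large, with the extension to $U_q\gl_M$ handled by the observation that $A_{i,\mu}$ depends only on $\mu_i-\mu_{i+1}$. The paper's proof is exactly this, stated more tersely and without your explicit discussion of the sign bookkeeping.
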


 \begin{proof} By complete reducibility we may assume that $U$ is
irreducible.  The relations hold for the action of $A_{i,\mu}$ on
$\otimes_{i=1}^M\bigwedge^{\mu_i}V(z_i)$ via Howe duality: (i) follows
from the fact that the corresponding $\check R$-matrices act on
different factors of the tensor product, (ii) is the inversion
relation \eqref{e-4} and (iii) is the braiding relation \eqref{e-3}.
Thus the relations hold for any irreducible $U_q\gl_M$-module
occurring in the decomposition of the tensor product. But from
Proposition \ref{p-5} we see that any  partition does occurs as a highest weight if we
take $N$ sufficiently large. This covers all type I irreducible
representations of $U_q\mathfrak{sl}_M$. For $U_q\gl_M$ we can obtain
all representations by tensoring these with a power of the one
dimensional representations on which $U_q\mathfrak{sl}_M$ acts
trivially (i.e. by the counit) and $t_i$ act by $q^{-1}$ for all
$i$. This has the effect of shifting the $\mu_i$s by a common amount
and does not affect the validity of the claim, since $A_{i,\mu}(z)$
depends on differences of $\mu_i$s.
 \end{proof} This result can be reformulated as the construction of a
representation of the Weyl group on $U$-valued functions. Let $U$ be a
finite-dimensional type I $U_q\gl_M$-module with weight decomposition
$U=\oplus_\mu U[\mu]$. Let $A_{i,U}(z)$ be the
$\operatorname{End}U$-valued rational function such that
\begin{equation}\label{e-6}
  A_{i,U}(z)|_{U[\mu]}=A_{i,\mu}(z)\colon U[\mu]\to U[s_i\mu]
\end{equation}
for all weights $\mu$.

\begin{cor}\label{c-1} Let $U$ be a finite-dimensional $U_q\gl_M$-module of type I.
The endomorphisms $s_1,\dots, s_{M-1}$ of the vector space $U(z)$ of
$U$-valued rational functions in $M$ variables $z$ given by
  \[ (s_if)(z)=A_{i,U}(s_iz)f(s_iz),\quad i=1,\dots,M-1,
  \] define a representation of the symmetric group $S_M$.
\end{cor}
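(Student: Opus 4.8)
The plan is to show that Corollary~\ref{c-1} is a direct reformulation of Theorem~\ref{t-2}: I will verify that the endomorphisms $s_1,\dots,s_{M-1}$ of $U(z)$ satisfy the defining (Coxeter) relations of $S_M$, namely $s_i^2=\mathrm{id}$, $s_is_j=s_js_i$ for $|i-j|\ge2$, and $s_is_{i+1}s_i=s_{i+1}s_is_{i+1}$. First I would record that each $s_i$ is a well-defined linear map on $U(z)$: since $U$ has only finitely many weights the series $A_{i,\mu}(z)$ truncates to a rational $\operatorname{End}(U[\mu])$-valued function, so $A_{i,U}(z)=\bigoplus_\mu A_{i,\mu}(z)$ is a rational $\operatorname{End}(U)$-valued function that carries the block $U[\mu]$ into $U[s_i\mu]$, and the assignment $f\mapsto A_{i,U}(s_i\cdot)\,f(s_i\cdot)$ takes rational functions to rational functions. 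In particular $s_i^2=\mathrm{id}$ (once proved) makes each $s_i$ invertible, so we really get a homomorphism $S_M\to\operatorname{GL}(U(z))$. Because the relations to be checked are linear in $f$, it suffices to check them on functions $f_\mu$ taking values in a single weight space $U[\mu]$; by complete reducibility one may also assume $U$ irreducible, though this is not needed here.

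Next I would unfold the composites. For a word $w=s_{j_1}\cdots s_{j_\ell}$ in $S_M$ and $f_\mu$ valued in $U[\mu]$, a straightforward induction on $\ell$, peeling off one generator at a time, gives
\[
 (w f_\mu)(z)=A_{j_1,\nu_1}(y_1)\,A_{j_2,\nu_2}(y_2)\cdots A_{j_\ell,\nu_\ell}(y_\ell)\,f_\mu(wz),
\]
where $\nu_\ell=\mu$, $\nu_r=s_{j_{r+1}}\cdots s_{j_\ell}\mu$, $y_r=(s_{j_1}\cdots s_{j_r})z$, and the product of $A$-factors is read as a composition $U[\mu]\to U[w\mu]$. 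Whenever a relation $w=w'$ holds in $S_M$, the factor $f_\mu(wz)$ is common to both sides of the corresponding relation for the $s_i$ and cancels, and one is left with an identity of $\operatorname{Hom}(U[\mu],U[w\mu])$-valued rational functions among the telescoped products of $A$-factors. Using that $A_{i,\mu}(z)$ depends on $z$ only through $z_i/z_{i+1}$, these identities are precisely the relations of Theorem~\ref{t-2}: the relation $s_i^2=\mathrm{id}$ reduces directly to $A_{i,s_i\mu}(s_iz)A_{i,\mu}(z)=\mathrm{id}_{U[\mu]}$, which is (ii); the relation $s_is_j=s_js_i$ with $|i-j|\ge2$ reduces, after the change of variables $z\mapsto s_is_jz$, to (i); and the braid relation $s_is_{i+1}s_i=s_{i+1}s_is_{i+1}$ reduces, after the change of variables $z\mapsto s_is_{i+1}s_iz$, to (iii). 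Each such substitution is permissible because relations (i)--(iii) hold identically in the variables $z$ and in the weight $\mu$.

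The step I expect to be the real work is the last one: correctly telescoping the composites and then matching the accumulated permutation of the spectral variables and the successive weights $s_{j_r}\cdots s_{j_\ell}\mu$ with the precise indices appearing in Theorem~\ref{t-2}. This is pure bookkeeping, but in the braid relation it means lining up six $A$-factors on the two sides of (iii), and care is needed with the convention that $s_i$ acts on the argument of $f$ and on the leading matrix factor by the transposition of $z_i$ and $z_{i+1}$ (so that, for instance, the leftmost factor acquires the spectral argument $s_iz$ rather than $z$), together with the fact that the scalar coefficients of $A_{i,\mu}(z)$ see only $|\mu_i-\mu_{i+1}|$ whereas the divided powers see the signed difference $\mu_i-\mu_{i+1}$, which is exactly what makes the weight-shift accounting close up.
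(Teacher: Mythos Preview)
Your proof is correct and follows the same approach as the paper: unfold the composites into products of $A$-factors and observe that the Coxeter relations reduce, after the substitution $z\mapsto w^{-1}z$, precisely to identities (i)--(iii) of Theorem~\ref{t-2}. (One small slip in your general telescoped formula: the spectral arguments should read $y_r=(s_{j_r}\cdots s_{j_1})z$ rather than $(s_{j_1}\cdots s_{j_r})z$, as one sees already from unfolding $s_is_{i+1}s_if$; this does not affect the argument.)
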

\begin{proof} The proof is a straightforward verification that the
relations of the symmetric group are equivalent to the identities
(i)--(iii) of Theorem \ref{t-2}. For example, to check the
relation $s_is_{i+1}s_i=s_{i+1}s_is_{i+1}$ we write
  \begin{align*} (s_is_{i+1}s_if)(z) &=A_{i,U}(s_iz)(s_{i+1}s_if)(s_iz)
\\ &=A_{i,U}(s_iz)A_{i+1,U}(s_{i+1}s_iz)(s_if)(s_{i+1}s_iz) \\
&=A_{i,U}(s_iz)A_{i+1,U}(s_{i+1}s_iz)A_{i,U}(s_is_{i+1}s_iz)f(s_{i}s_{i+1}s_iz).
  \end{align*}
  We get the left-hand side of (iii) if we replace $z$ by $ s_is_{i+1}s_iz$ and
  restrict to the weight space $U[\mu]$.
\end{proof}

We notice that the operators $A_{i,U}(z)$ have similar properties as
the operators of the dynamical Weyl group of $U$ introduced and
studied in \cite{TarasovVarchenko2000,EtingofVarchenko2002} in the
framework of the theory of intertwiners. As in the case of the
dynamical Weyl group, their restriction to weight spaces are the image
of elements of $U_q\mathfrak{sl}_2$ by embeddings
$U_q\mathfrak{sl}_2\hookrightarrow U_q\mathfrak{sl}_M$ corresponding
to simple roots,\footnote{In \cite{EtingofVarchenko2002} the dynamical
  Weyl group is defined for any Kac--Moody Lie algebra.} {}see \cite[Section 4.1]{EtingofVarchenko2002}. They obey
relations (i) and (iii) of Theorem \ref{t-2} but not (ii). In
other words they define a representation of the braid group $B_M$ on
rational $U$-valued functions. In this representation the squares
$s_i^2$ of the generators act by multiplication by  a rational function on each weight
space.

Let us compare our operators $A_{i,U}(z)$ with the operators
$\mathscr A_{s_i,U}(\lambda)$ of \cite[Section
4.2]{EtingofVarchenko2002} defining the dynamical action of generators,
in the case of $U_q\mathfrak{sl}_M$. The
latter operators are initially defined as functions of sufficiently
large antidominant weights $\lambda$. They extend (upon choosing a
logarithm of $q$) to meromorphic functions of
$\lambda\in \mathfrak h^*$ in the dual of the Cartan subalgebra
$\mathfrak h=\{x\in\mathbb C^M\,|\,\sum_i x_i=0\}$ of
$\mathfrak{sl}_M$.  Let $h_i=(0,\dots,0,1,-1,0,\dots,0)$,
with $1$ in the position $i=1,\dots,M-1$, be the basis of coroots. To relate $z$ to $\lambda$
we identify $\mathfrak h^*$ with $\mathbb C^M/\mathbb C(1,\dots, 1)$
and write $z=q^{2\lambda}=(q^{2\lambda_1},\dots,q^{2\lambda_M})$. This
is defined up to simultaneous scaling of the variables
$z_i$. Since $A_{i,U}(z)$ is a function of the ratios of $z_j$
$A_{i,U}(q^{2\lambda})$ is well-defined.
\begin{prop}\label{p-3}
  \[
    A_{i,U}(q^{2\lambda})|_{U[\mu]}=
    \begin{cases}
      q^{-\mu(h_i)}\mathscr{A}_{s_i,U}(\lambda), &\text{if $\mu(h_i)\geq0$,}
      \\
\displaystyle   (-1)^{\mu(h_i)}   q^{-\mu(h_i)}\frac{[\lambda(h_i)-\mu(h_i)/2]_q}{[\lambda(h_i)+\mu(h_i)/2]_q}
      \mathscr{A}_{s_i,U}(\lambda),&\text{if
        $\mu(h_i)\leq 0$}.
    \end{cases}
  \]
\end{prop}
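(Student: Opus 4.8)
The plan is to reduce the identity to a purely $U_q\mathfrak{sl}_2$-statement via the simple root embedding $j_i$ and then match the two universal formulas for the action on a weight space. Since $A_{i,U}(z)|_{U[\mu]} = j_i\bigl(A_{\mu(h_i)}(z_i/z_{i+1})\bigr)$ by construction, and the operator $\mathscr A_{s_i,U}(\lambda)$ of \cite{EtingofVarchenko2002} is likewise the image under $j_i$ of the corresponding $U_q\mathfrak{sl}_2$ dynamical Weyl group operator $\mathscr A_{s,V}(\lambda(h_i))$ acting on the appropriate weight space, it suffices to prove the proposition for $\mathfrak{sl}_2$: one must show that on the weight-$m$ space (where $m=\mu(h_i)$) of a finite dimensional $U_q\mathfrak{sl}_2$-module,
\[
  A_m(q^{2\ell}) =
  \begin{cases}
    q^{-m}\,\mathscr A_{s}(\ell), & m\geq 0,\\[2mm]
    (-1)^m q^{-m}\dfrac{[\ell-m/2]_q}{[\ell+m/2]_q}\,\mathscr A_{s}(\ell), & m\leq 0,
  \end{cases}
\]
with $\ell=\lambda(h_i)$ and $A_m$ as in \eqref{e-5}. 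Both sides are rational in $q^{\ell}$ and $U_q\mathfrak{sl}_2$-module maps sending the weight-$m$ space to the weight-$(-m)$ space, and by complete reducibility it is enough to check them on each irreducible $V_n$ (highest weight $n$), i.e.\ on the one-dimensional weight spaces of $V_n$.

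The key computational step is to have an explicit formula for the $\mathfrak{sl}_2$ dynamical Weyl group operator $\mathscr A_{s,V_n}(\ell)$ on weight spaces. Such a formula is standard (it appears in \cite{TarasovVarchenko2000,EtingofVarchenko2002}; in the appendix of the present paper it should be Lemma \ref{l-A1}, which also gives a closed form for $A_m(z)$ on $V_n$): on the weight-$m$ vector $F^{(j)}v_n$ (with $m=n-2j$), $\mathscr A_{s,V_n}(\ell)$ acts, up to the relevant $q$-binomial/$q$-Gamma normalizing scalars, by an explicit product of factors of the form $\dfrac{[\ell+\text{(half-integer)}]_q}{[\ell+\text{(half-integer)}]_q}$. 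Plugging in $z=q^{2\ell}$ into the formula of Lemma \ref{l-A1} for $A_m$ and comparing term by term, one reads off the proportionality scalar between $A_m(q^{2\ell})$ and $\mathscr A_{s}(\ell)$. The scalar depends only on the weight $m$ (not on $n$, because both operators have the universal form given by the $U_q\mathfrak{sl}_2$ element acting on $V_n$), so it is determined by evaluating on the lowest weight vector or on any convenient single weight space; this yields the two stated cases $m\geq 0$ and $m\leq 0$, the asymmetry coming from the asymmetric normalization of $\check R$ (highest-weight-space normalization) built into $A_m$ versus the intertwiner normalization built into $\mathscr A_{s}$.

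The main obstacle is bookkeeping: reconciling the normalization conventions. The operator $A_m(z)$ is normalized so that $\check R_{k,k'}$ fixes the product of highest weight spaces (Theorem \ref{t-1}), whereas $\mathscr A_{s_i,U}(\lambda)$ is normalized through the intertwining operators $\Phi^w$ of \cite{EtingofVarchenko2002}, which carry $q$-Gamma-function prefactors and a different sign convention for negative weights. I would therefore first fix, for the rank-one case, a precise dictionary: write down $\mathscr A_{s,V_n}(\ell)$ on each $F^{(j)}v_n$ from the definition in \cite[Section 4.2]{EtingofVarchenko2002}, write down $A_m(q^{2\ell})$ on the same vector from \eqref{e-5}/Lemma \ref{l-A1}, and verify that their ratio is the claimed $m$-dependent scalar by a direct manipulation of $q$-numbers (telescoping the ratio $[\ell-m/2]_q/[\ell+m/2]_q$ against the product over $j$ in $A_m$). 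The appearance of $(-1)^m$ and the ratio $[\ell-m/2]_q/[\ell+m/2]_q$ exactly in the $m\leq 0$ case is the fingerprint of the fact that $A_m$ for $m<0$ is built from $E^{(j+|m|)}F^{(j)}$ while the intertwiner normalization effectively uses the opposite ordering; tracking this sign and ratio carefully is where the real work lies. Once the rank-one identity is established, applying $j_i$ and invoking $A_{i,U}(q^{2\lambda})|_{U[\mu]} = j_i(A_{\mu(h_i)}(q^{2(\lambda_i-\lambda_{i+1})}))$ together with the analogous compatibility of $\mathscr A_{s_i,U}$ with simple root embeddings finishes the proof for $U_q\mathfrak{sl}_M$, and hence for $U_q\gl_M$ since everything depends only on $\mu$ through $\mu(h_i)$.
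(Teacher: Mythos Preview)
Your proposal is correct and follows essentially the same route as the paper's proof: reduce to $U_q\mathfrak{sl}_2$ via the simple root embedding $j_i$, pass to irreducibles by complete reducibility, quote the explicit formula for $\mathscr{A}_{s,L_\ell}(\lambda)$ from \cite{EtingofVarchenko2002}, and compare it with the closed form for $A_m(z)$ on $L_\ell$ given in Lemma \ref{l-A1}. One small caution: the sentence ``the scalar depends only on the weight $m$ (not on $n$, because both operators have the universal form)'' is not by itself a valid argument---two universal elements of $U_q\mathfrak{sl}_2$ can have an $n$-dependent ratio on a one-dimensional weight space---so the $n$-independence must emerge from the explicit computation (as it does in the paper, where for $m\leq 0$ the product over $j$ telescopes to $(-1)^m(1-zq^{-m})/(1-zq^m)$, visibly free of $\ell$).
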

\begin{proof}
  Since $A_{i,\mu}$ are images of elements of $U_q\mathfrak{sl}_2$ by
  embeddings into $U_q\mathfrak{sl}_M$, it is sufficient to consider
  the case $M=2$ and we set $\lambda=\lambda_1-\lambda_2$ and $z=q^{2\lambda}=z_1/z_2$.
  By complete reducibility we may also assume that
  $U=L_\ell$ is the irreducible representation of $U_q\mathfrak{sl}_2$
  of highest weight $\ell$ and dimension $\ell+1$. It has
  one-dimensional weight spaces $L_\ell[\ell-2k]$ with basis
  $v_{\ell-2k}=F^{(\ell)}v_\ell$, ($k=0,\dots,\ell$), in terms of the
  highest weight vector $v_\ell$.  In this case we have a unique
  generator $s_1$ sending $x\in\mathfrak h^*$ to $-x$.  The following
  formula for $\mathscr A_{s_1,L_\ell}(\lambda)$ can be extracted from
  \cite{EtingofVarchenko2002}, see Corollary 8 (iii), Proposition 12
  and the definition of the dynamical action of the braid group at the
  end of Section 4.2:
  \[
    \mathscr A_{s_1,L_{\ell}}(\lambda)v_{m}= (-1)^kq^{m}\prod_{j=1}^{k}
    \frac{[\lambda+k-j-\ell/2]_q} {[\lambda-k+j+\ell/2]_q}v_{-m},\quad
    m=\ell-2k.
  \]
  % Formula corrected 27.09.2023 \ell->-\ell
  By setting $z=q^{2\lambda}$, this can be rewritten as
  \[
    \mathscr A_{s_1,L_{\ell}}(\lambda)v_{\ell-2k}=
    (-1)^kq^{m+k(\ell-k+1)}\prod_{j=0}^{k-1}\frac{1-z
      q^{-\ell+2j}}{1-zq^{\ell-2j}} v_{2k-\ell}.
  \]
  On the other hand, $A_{1,L_\lambda}(z)$ acts on the weight $m$
  subspace of any representation as $A_m(z)$, see \eqref{e-5}.

  It follows from Lemma \ref{l-A1} that
  $A_m(z)=\varphi_m(z)\mathscr A_{s,L_\ell}(q^{2\lambda})$ with
  $\varphi_m(z)=q^{-m}$ for $m\geq0$. If $m\leq0$, \dontprint{
    \begin{align*}
      \varphi_m(z)
      &=
        q^{-m+\frac14(\ell-|m|)(\ell+|m|+2)-\frac14(\ell+|m|)(\ell-|m|+2)}
        \frac{
        \prod_{j=0}^{\frac{\ell-|m|}2-1}\frac{1-zq^{-\ell+2j}}{1-zq^{\ell-2j}}
        }
        {
        \prod_{j=0}^{\frac{\ell+|m|}2-1}\frac{1-zq^{-\ell+2j}}{1-zq^{\ell-2j}}
        }
      \\
      &=
        q^{-m+\frac14(\ell-|m|)(\ell+|m|+2)-\frac14(\ell+|m|)(\ell-|m|+2)}
        \prod_{j=\frac{\ell-|m|}2}^{\frac{\ell+|m|}2-1}\frac{1-zq^{\ell-2j}}{1-zq^{-\ell+2j}}
      \\
      &=\frac{1-zq^{-m}}{1-zq^{m}}.
    \end{align*}
  }
  \[
    \varphi_m(z) =(-1)^m
    \prod_{j=\frac{\ell-|m|}2}^{\frac{\ell+|m|}2-1}\frac{1-zq^{\ell-2j}}{1-zq^{-\ell+2j}}
    =(-1)^m\frac{1-zq^{-m}}{1-zq^{m}}, \quad m\leq 0.
  \]
  so that
  $\varphi_m(q^{2\lambda})=(-1)^mq^{-m}\frac{[\lambda-m/2]_q}
  {[\lambda+m/2]_q}$.
\end{proof}

\subsection{Dynamical Weyl group action for symmetrizable Kac--Moody algebras}{}\label{ss-KacMoody}
Let $U_q\mathfrak g$ be the Drinfeld--Jimbo quantum universal
enveloping algebra of a symmetrizable Kac--Moody algebra $\mathfrak g$
with simple roots $\alpha_1,\dots,\alpha_r$ and simple coroots
$h_1,\dots,h_r$, Cartan matrix $a_{ij}=\alpha_j(h_i)$ and
$d_i\in\mathbb Z_{\geq1}$ so that $(d_ia_{ij})$ is symmetric. Let
$Q=\oplus_{i=1}^r\mathbb Z\alpha_i$ be its root lattice and $T$ the
torus $\operatorname{Hom}(Q,\mathbb C^\times)$ of characters of $Q$.
The Weyl group
$\mathbb W$ acts on $R$ and $T$.  Then for each simple root
$\alpha_i$, we have an embedding
$j_i\colon U_{q_i}\mathfrak{sl}_2\to U_q\mathfrak g$ with $q_i=q^{d_i}$
and an evaluation map
$z_{\alpha_i}\in\operatorname{Hom}(T,\mathbb C^\times)$ so that the
ring of regular functions on $T$ is
$\mathbb C[z_{\alpha_1}^{\pm1},\dots,z_{\alpha_r}^{\pm1}]$.

The following generalization of Corollary \ref{c-1}, which was
formulated as a conjecture in a previous version of this work, is a
refinement of the construction of Etingof and Varchenko of a dynamical
action of the {\em braid} group on rational functions with values in an
integrable representation.  A proof by Anfisa Gurenkova is included in
Appendix \ref{a-2}.

\begin{theorem}\label{t-KacMoody}
  Let $U$ be an integrable representation of $U_q\mathfrak g$ and
  for each $i=1,\dots, r$ let $A_{i,U}(z)$ be the $\mathrm{End}(U)$-valued
  rational function on $T$ such that on the weight space of weight $\mu$
  \[
    A_{i,U}(z)|_{U[\mu]}=j_i(A_{\mu(h_i)}(z_{\alpha_i}))\colon U[\mu]\to U[s_j\mu],
  \]
  where $A_m(z)$ is defined in \eqref{e-5}.
  Then the action of simple reflections
  \[
    (s_if)(z)=A_{i,U}(s_iz)f(s_iz),\quad i=1,\dots r,
  \]
  defines a representation of $\mathbb W$ on rational functions on $T$
  with values in $U$.    
\end{theorem}

\subsection{\texorpdfstring{$B$}{B}-operators, quantum Weyl group, Yangian limit}
The limiting values of $R$-matrices as the spectral parameter tends to $0$ or $\infty$ are
solutions of the Yang--Baxter equations without spectral parameter. They are $R$-matrices
for quantum enveloping algebras of finite dimensional Lie algebras.
By dividing the $A$-operators by their limiting values we obtain weight zero ``$B$-operators''
\cite{EtingofVarchenko2002}.
Define a formal series $B_m(z)$ by the formula
\begin{equation}\label{e-7}
  A_m(z)=A_{m}(0)B_m(z).
\end{equation}
The series $B_m(z)$ is well-defined as an endomorphism of the
$m$-weight space of any finite dimensional representation and one has
a universal formula:
\begin{prop}\label{p-6}\
  \begin{enumerate}
    \item[(i)]
  For $m\geq0$,
  \[
    B_m(z)=\sum_{j=0}^\infty
    q^{\frac{j(j-3)}2}\frac{(-z)^j}{[j]_q!}F^jE^j\prod_{i=1}^j\frac{1-q^2}{1-z
      q^{m+2i}}
  \]
    \item[(ii)]
  For $m\leq0$,
  \[
    B_m(z)=\sum_{j=0}^\infty
    q^{\frac{j(j-3)}2}\frac{(-z)^j}{[j]_q!}E^jF^j\prod_{i=1}^j\frac{1-q^2}{1-z
      q^{-m+2i}}
  \]
\end{enumerate}
\end{prop}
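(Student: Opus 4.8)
The plan is to reduce the identity $A_m(z)=A_m(0)B_m(z)$ to a scalar computation on irreducible $U_q\mathfrak{sl}_2$-modules and then to a $q$-binomial summation. We regard $A_m(z)$, $A_m(0)$ and $B_m(z)$ as elements of a suitable completion of $U_q\mathfrak{sl}_2$ acting on the weight-$m$ space of any finite-dimensional type I module; the invertibility of $A_m(0)$ on that space (needed to make sense of $B_m(z)=A_m(0)^{-1}A_m(z)$) drops out of the computation below. By complete reducibility it suffices to check the identity on each irreducible module $L_\ell$ of highest weight $\ell\in\mathbb Z_{\geq 0}$. Fix $\ell$ with $L_\ell[m]\neq 0$, so $m\equiv\ell\pmod 2$, and write $\ell=|m|+2k$ with $k\in\mathbb Z_{\geq 0}$. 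Then $L_\ell[m]$ is one-dimensional; on it $A_m(z)$ and $A_m(0)$ act as scalars times the canonical identification $L_\ell[m]\cong L_\ell[-m]$, while $B_m(z)$ acts by a scalar. Thus for each such $\ell$ the statement becomes an identity of rational functions of $z$.

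Next I would compute the three scalars in the case $m\geq 0$ (the case $m\leq 0$ being entirely parallel). Write $\ell=m+2k$ and $v_m=F^{(k)}v_\ell$ in terms of the highest weight vector. For $A_m(z)$ I would invoke Lemma \ref{l-A1} of the Appendix — equivalently, the formula for $\mathscr A_{s_1,L_\ell}(\lambda)$ recorded in the proof of Proposition \ref{p-3} together with the relation $A_m(z)=q^{-m}\mathscr A_{s_1,L_\ell}(\lambda)$, $z=q^{2\lambda}$, valid for $m\geq 0$ — which gives
\[
  A_m(z)\,v_m=(-1)^k q^{k(\ell-k+1)}\prod_{j=0}^{k-1}\frac{1-zq^{-\ell+2j}}{1-zq^{\ell-2j}}\,v_{-m},
\]
hence $A_m(0)v_m=(-1)^kq^{k(\ell-k+1)}v_{-m}$ and, after substituting $\ell=m+2k$,
\[
  B_m(z)\,v_m=A_m(0)^{-1}A_m(z)\,v_m=\prod_{i=1}^{k}\frac{1-zq^{-m-2i}}{1-zq^{m+2i}}\,v_m .
\]
On the other hand, the standard relation $EF^{(r)}v_\ell=[\ell-r+1]_q\,F^{(r-1)}v_\ell$ gives $E^{j}v_m=\frac{[m+k+j]_q!}{[m+k]_q!}F^{(k-j)}v_\ell$ and then $F^{j}F^{(k-j)}v_\ell=\frac{[k]_q!}{[k-j]_q!}v_m$, so the proposed formula for $B_m(z)$ acts on $v_m$ by the scalar
\[
  \sum_{j=0}^{k} q^{\frac{j(j-3)}2}(-z)^{j}\,\frac{[m+k+j]_q!\,[k]_q!}{[j]_q!\,[m+k]_q!\,[k-j]_q!}\prod_{i=1}^{j}\frac{1-q^{2}}{1-zq^{m+2i}},
\]
the sum being finite because $E^{j}v_m=0$ for $j>k$.

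Equating these two scalars and clearing the denominators $\prod_{i=1}^k(1-zq^{m+2i})$, the proposition for $m\geq 0$ reduces to the polynomial identity
\[
  \prod_{i=1}^{k}(1-zq^{-m-2i})=\sum_{j=0}^{k} q^{\frac{j(j-3)}2}(-z)^{j}(1-q^{2})^{j}\,\frac{[m+k+j]_q!\,[k]_q!}{[j]_q!\,[m+k]_q!\,[k-j]_q!}\prod_{i=j+1}^{k}(1-zq^{m+2i}),
\]
an equality of polynomials of degree $k$ in $z$; the case $m\leq 0$ reduces to the same identity with $m$ replaced by $|m|$. I would prove this identity either by induction on $k$, or by expanding both sides as polynomials in $z$: the left-hand side equals $\prod_{i=0}^{k-1}(1-wp^{i})$ with $w=zq^{-m-2}$, $p=q^{-2}$, so its coefficients are given by the $q$-binomial theorem, and one checks that collecting powers of $z$ on the right-hand side yields the same coefficients after repeated use of $q$-Pascal-type relations and of the identities relating $[n]_q$ and $[n]_{q^2}$. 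I expect this last $q$-arithmetic identity to be the main obstacle, the representation-theoretic reduction itself being routine. As a more self-contained alternative, avoiding the Appendix, one may instead verify $A_m(z)=A_m(0)B_m(z)$ directly in the completion of $U_q\mathfrak{sl}_2$: expand the product $A_m(0)B_m(z)$ and re-order the resulting monomials into the family $\{E^{(j)}F^{(j+m)}\}_{j\geq 0}$ using Lusztig's commutation formula for $E^{b}$ past $F^{(c)}$, then match coefficients with $A_m(z)$; this runs into a $q$-binomial identity of the same flavour.
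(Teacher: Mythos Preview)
Your reduction to irreducibles and the computation of both sides on $L_\ell[m]$ are correct, and the polynomial identity you isolate is indeed equivalent to the proposition (the case $k=1$ checks out, and the general case follows by induction on $k$ or by the residue/partial-fraction argument you outline). So the approach is sound; what remains is exactly the $q$-arithmetic you flag as the obstacle.

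The paper takes a different and much shorter route. For (i) it does not compute at all: it invokes the remark after Corollary~40 of \cite{EtingofVarchenko2002}, which already gives the $F^jE^j$ expansion of the Etingof--Varchenko $B$-operator, and then uses Proposition~\ref{p-3} to identify that operator with $B_m(z)$ up to an explicit scalar. For (ii) the paper does not rerun the argument but applies the Cartan involution $\theta\colon E\mapsto -F,\ F\mapsto -E,\ K\mapsto K^{-1}$ via Corollary~\ref{c-A1}, which states $\theta(B_m(z))|_{U[-m]}=B_{-m}(z)|_{U[-m]}$; since $\theta(F^jE^j)=E^jF^j$ and the scalar coefficients depend on $m$ only through $|m|$, formula (i) is transported directly to formula (ii). This involution trick is worth absorbing: it explains the visible symmetry between the two formulas and halves the work, whereas your ``entirely parallel'' remark commits you to redoing the whole computation. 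The trade-off is that your argument is self-contained and avoids relying on the exact form of the statement in \cite{EtingofVarchenko2002}, which the paper itself notes may require adjusting the powers of $q$.
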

\begin{proof}
  The first formula can be checked directly or deduced from the remark
  after Corollary 40 in \cite{EtingofVarchenko2002}, by using
  Proposition \ref{p-3}. There is a similar formula in {\it
    loc.~cit.}, Proposition 14, but probably the powers of $q$ need to
  be adjusted there. The second formula follows by applying the Cartan involution, see
  Corollary \ref{c-A1}
\end{proof}
For $m=k-k'$, these operators define via Howe duality
$U_q\gl_N$-linear endomorphisms of
$\bigwedge^{k} V(z_1)\otimes\bigwedge^{k'}V(z_2)$. Their
representation theoretical meaning in terms of $U_qL\gl_N$ is not
clear but in the limit $q\to 1$, $A_{m}(0)$ tends up to signs to the
permutation $P$ and in a suitable $z\to 1$ limit, $B_m(z)$ converges
to a rational $R$ matrix, intertwining the coproduct and the opposite
coproduct of the Yangian. More precisely we let
$q=\exp(\epsilon\hbar)$, $z=\exp(2\epsilon u)$ and consider the limit
$\epsilon\to0$:
\begin{theorem}\label{t-3}
  Let $\check R_{k,k'}(z,q)$ be the family of braiding matrices of
  Theorem \ref{t-1}, viewed as a linear map on the underlying vector
  space $\bigwedge^kV\otimes \bigwedge^{k'}V$.  Let
  \[
    R_{k,k'}(z,q)=P\check R_{k,k'}(z,q)\in\textstyle{\operatorname{End}_{\mathbb
    C}(\bigwedge^kV\otimes \bigwedge^{k'}V)},\text{ with
    $Pv'\otimes v=(-1)^{kk'}v\otimes v'$,}
\]
be the corresponding
  $R$-matrix, normalized to be the identity on the product of highest
  weight spaces.  Then the limit
  \[ R^{\mathrm{rat}}_{k,k'}(u,\hbar)=\lim_{\epsilon\to
0}R_{k,k'}(e^{\hbar \epsilon},e^{2\epsilon u})
  \] exists and is a $\gl_N$-invariant solution of the Yang--Baxter
equation. Moreover
  \[ R^{\mathrm{rat}}_{k,k'}(u,\hbar)=
\sum_{j\geq0}\frac{(-\hbar)^j}{j!}\prod_{i=1}^j\frac1{u+\hbar(i+|k-k'|/2)}
    \begin{cases} F^jE^j&\text{if $k\geq k'$,} \\ E^jF^j&\text{if
$k\leq k'$.}
    \end{cases}
  \] Here $E=\sum_{i=1}^N\psi^*_i\otimes\psi_i$,
$F=-\sum_{i=1}^N\psi_i\otimes\psi_i^*$ are the $q\to 1$ limits of the
$U_q\mathfrak{sl}_2$-generators of Section \ref{s-2}.
\end{theorem}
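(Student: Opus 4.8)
The plan is to use the factorization $A_m(z)=A_m(0)B_m(z)$ of \eqref{e-7}, together with the closed formula for $B_m(z)$ in Proposition \ref{p-6}, and to analyze the two factors separately in the limit $\epsilon\to 0$ with $q=e^{\epsilon\hbar}$ and $z=e^{2\epsilon u}$. By Theorem \ref{t-1},
\[
  R_{k,k'}(z,q)=P\,\check R_{k,k'}(z,q)=(-1)^{\min(k,k')}\,P\,A_{k-k'}(z)=(-1)^{\min(k,k')}\,P\,A_{k-k'}(0)\,B_{k-k'}(z),
\]
where $B_{k-k'}(z)$ is an endomorphism of the finite dimensional space $\bigwedge^kV\otimes\bigwedge^{k'}V$ (being of weight zero), while $\check R_{k,k'}$ and $P$ each exchange the two tensor factors, so that $R_{k,k'}(z,q)\in\operatorname{End}(\bigwedge^kV\otimes\bigwedge^{k'}V)$. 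On a finite dimensional module only finitely many $j$ contribute to the sums defining $A_m$ and $B_m$, so it suffices to pass to the limit term by term.

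The first step is the $B$-factor. Substituting $q=e^{\epsilon\hbar}$, $z=e^{2\epsilon u}$ into the formula of Proposition \ref{p-6} and using $q^{j(j-3)/2}\to 1$, $(-z)^j\to(-1)^j$, $[j]_q!\to j!$, together with $1-q^2=-2\epsilon\hbar+O(\epsilon^2)$ and $1-zq^{|m|+2i}=-2\epsilon\bigl(u+\hbar(i+|m|/2)\bigr)+O(\epsilon^2)$, each factor $\tfrac{1-q^2}{1-zq^{|m|+2i}}$ tends to $\tfrac{\hbar}{u+\hbar(i+|m|/2)}$; moreover $K_j=t_j\otimes t_j^{-1}\to 1$, so $E,F$ tend to $\sum_i\psi_i^*\otimes\psi_i$ and $-\sum_i\psi_i\otimes\psi_i^*$. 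Hence, with $m=k-k'$,
\[
  \lim_{\epsilon\to0}B_{m}(e^{2\epsilon u})=\sum_{j\ge0}\frac{(-\hbar)^j}{j!}\Bigl(\prod_{i=1}^j\frac1{u+\hbar(i+|k-k'|/2)}\Bigr)
  \begin{cases}F^jE^j,&k\ge k',\\ E^jF^j,&k\le k',\end{cases}
\]
the two cases $m\ge0$ and $m\le0$ of Proposition \ref{p-6} producing $F^jE^j$ and $E^jF^j$ respectively. This is exactly the series $R^{\mathrm{rat}}_{k,k'}(u,\hbar)$ of the statement, and it is meromorphic in $u$ since only finitely many $j$ occur.

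The second step is the $A(0)$-factor: I would show that $(-1)^{\min(k,k')}A_{k-k'}(0,q)$ converges, as $q\to 1$, to the graded flip $P'\colon\bigwedge^kV\otimes\bigwedge^{k'}V\to\bigwedge^{k'}V\otimes\bigwedge^kV$, i.e.\ that the braiding $\check R_{k,k'}(0,q)$ degenerates to the flip at $q=1$. In $A_m(z,q)$ as in \eqref{e-5} every rational prefactor tends to $1$ as $q\to1$, independently of $z$, so $A_m(z,q)|_{q=1}$ is a $z$-independent operator; at $q=1$ the universal $R$-matrix of $U\gl_N$ is $1\otimes 1$, so the (normalized) braiding is the graded flip, the normalizations matching because $P'v^k\otimes v^{k'}=(-1)^{kk'}v^{k'}\otimes v^k$ coincides with the normalization of $\check R_{k,k'}$ in Theorem \ref{t-1}; alternatively this is a direct sign check from the Date--Okado expression used in the proof of Theorem \ref{t-1}. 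Combining this with the previous step and using $P\circ P'=\mathrm{id}$ on $\bigwedge^kV\otimes\bigwedge^{k'}V$ (composite of two graded flips) gives $\lim_{\epsilon\to0}R_{k,k'}(e^{2\epsilon u},e^{\epsilon\hbar})=P\,P'\,R^{\mathrm{rat}}_{k,k'}(u,\hbar)=R^{\mathrm{rat}}_{k,k'}(u,\hbar)$, establishing existence of the limit and the formula.

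It remains to check the two structural properties. For $\gl_N$-invariance: by Lemma \ref{l-1} the operators $E,F$ commute with the $U_q\gl_N$-action for every $q$, so their $q\to1$ limits commute with the diagonal $\gl_N$-action on $\bigwedge^kV\otimes\bigwedge^{k'}V$; as $R^{\mathrm{rat}}_{k,k'}$ is a rational expression in $E$ and $F$, it is $\gl_N$-invariant (equivalently, $R_{k,k'}(z,q)$ intertwines $\Delta$ and $\Delta'$ for $U_qL\gl_N$, and $\Delta=\Delta'$ at $q=1$). For the Yang--Baxter equation: put $z_a=e^{2\epsilon u_a}$, $q=e^{\epsilon\hbar}$ in the braid relation \eqref{e-3} and the inversion relation \eqref{e-4}, observe that each factor converges by the first part to $P\,R^{\mathrm{rat}}$ evaluated at the appropriate difference of the $u_a$, and pass to the limit (a finite composition of finite sums, continuous at $\epsilon=0$ for generic parameters), obtaining the braid and inversion relations for $P\,R^{\mathrm{rat}}_{k,k'}$, equivalent to the rational Yang--Baxter equation for $R^{\mathrm{rat}}_{k,k'}$. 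The main obstacle is the identification $(-1)^{\min(k,k')}A_{k-k'}(0,q)\to P'$: morally it is just triviality of the braiding of $U\gl_N$, but some care with signs — or an appeal to the Date--Okado formula — is needed to check that no extra scalar survives on the non-top isotypic components.
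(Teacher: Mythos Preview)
Your proposal is correct and follows essentially the same route as the paper: factor $\check R_{k,k'}(z,q)=\check R_{k,k'}(0,q)\,B_{k-k'}(z,q)$ via \eqref{e-7}, identify the first factor at $q=1$ with the graded flip so that $P\,\check R_{k,k'}(0,1)=\mathrm{id}$, and take the limit of the $B$-factor term by term using Proposition~\ref{p-6}. The only point where the paper's argument is slightly tighter is the identification $\check R_{k,k'}(z,1)=P'$: rather than invoking triviality of a universal $R$-matrix (which is not how $\check R_{k,k'}$ is introduced here), the paper observes that $\check R_{k,k'}(z,1)$ is an $L\gl_N$-intertwiner between tensor products that are irreducible for $z\neq 1$, so by Schur's lemma it is a scalar multiple of $P'$, and the normalization fixes the scalar to $1$; this handles in one stroke the ``extra scalar on non-top isotypic components'' issue you flag at the end.
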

\begin{proof} The limit exists since both $E, F$ and the coefficients
in the formula for $\check R_{k,k'}$ have a limit as $\epsilon\to
0$. The Yang--Baxter equation then follows from the braiding relation \eqref{e-3}
of $\check R_{k,k'}$.  The action of the subalgebra $U_q\gl_N$ does
not involve the spectral parameter and becomes an action of $U\gl_N$,
so that the limit is $\gl_N$-invariant.

{}By Theorem \ref{t-3} $\check R_{k,k'}(z,q)$ is a rational function of $z$
with poles at finitely many integers powers of $q$. So if we take $q$ to be close
to 1,  $\check R_{k,k'}(z,q)$ is holomorphic in, say,
the disk $|z|<1/2$. Its value at $q=1$ is independent of $z$ and is a
homomorphism of $L\gl_N$-modules, which are irreducible for
$z=z_1/ z_2\neq1$. By Schur's lemma
$\check R_{k,k'}(z,1)=\check R_{k,k'}(0,1)$ must be proportional to
the homomorphism $P\colon u\otimes v\mapsto (-1)^{kk'}v\otimes
u$. With the normalization of Theorem \ref{t-1}, the factor of proportionality is 1.
It  follows that
\[
  R^{\mathrm{rat}}_{k,k'}(u,\hbar) = \lim_{\epsilon\to 0}\check
  R_{k,k'}(0,e^{\epsilon \hbar})^{-1} \check R_{k,k'}(e^{2\epsilon
    u},e^{\epsilon\hbar}).
\]
By Theorem \ref{t-1} this can be expressed in terms of
$A_m(z)=A_m(z,q)$ for $m=k-k'$:
\[
  R^{\mathrm{rat}}_{k,k'}(u,\hbar) = \lim_{\epsilon\to
    0}A_m(0,e^{\epsilon\hbar})^{-1}A_m(e^{2\epsilon
    u},e^{\epsilon\hbar}) = \lim_{\epsilon\to 0}B_m(e^{2\epsilon
    u},e^{\epsilon\hbar}).
\]
Then, in the limit $\epsilon\to 0$, $E=\sum\psi_i\otimes\psi_i^*$ and
$F=-\sum\psi_i^*\otimes\psi_i$ obey $\mathfrak{sl}_2$ commutation
relations and, by Proposition \ref{p-6}, $B_m$ tends to
\[
  \sum_{j=0}^\infty\frac{(-\hbar)^j}{j!}F^jE^j\prod_{i=1}^j\frac1{u+\hbar(i+m/2)}
\]
for $m\geq0$ and
\[
  \sum_{j=0}^\infty\frac{(-\hbar)^j}{j!}E^jF^j\prod_{i=1}^j\frac1{u+\hbar(i-m/2)}
\]
for $m\leq 0$.
\end{proof}
\begin{remark} If $k=k'$ we recover the formula proposed by Smirnov, see \cite{Smirnov2016}, eq.~(115).
  Understanding this formula was part of the motivation of the present work. For general $k,k'$ our formulas seem
  to be related to the formula in \cite{Smirnov2016} by a shift of spectral parameter which could be partly explained by
  a redefinition of exterior powers.
\end{remark}
\begin{remark}
  Another way to arrive at the formula for $R^{\mathrm{rat}}$ of Theorem \ref{t-3}
  is to view it as the $R$-matrix for Yangians and reproduce  the proof of Theorem
  \ref{t-1} in the rational case, by using the Yangian version of the Date--Okado formula
  \eqref{e-DateOkado}. A direct proof of the latter is found in \cite{TarasovUvarov2020-2},
  see formula (4.14).
\end{remark}

\appendix
\section{\texorpdfstring{$U_q\mathfrak{sl}_2$}{Uqsl2} yoga}\label{a-1}
We recall some basic facts about $U_q\mathfrak{sl}_2$,
see e.g. \cite{ChariPressley1994}, and
apply them to matrix elements of $A_m(z)$ and $B_m(z)$.
The irreducible representations of $U_q\mathfrak{sl}_2$  for
$q$ not a root of unity are deformations of those of $\mathfrak{sl}_2$.
The irreducible representation $L_\ell$ of highest weight $\ell\in\mathbb Z_{\geq0}$
has a basis $v^\ell_\ell,v^\ell_{\ell-2},\cdots,v^\ell_{-\ell}$ of weight vectors
with $v^\ell_{\ell-2k}=F^{(k)}v^\ell_\ell$.

The Cartan involution is the algebra automorphism of $U_q\mathfrak{sl}_2$
such that $\theta(E)=-F,\theta(F)=-E,\theta(K)=K^{-1}$. It is a coalgebra
antihomomorphism and for $v\in L_\ell$, $X\in U_q\mathfrak{sl}_2$,
$\theta(X)v=s_\ell X s_\ell^{-1}v$, where
\[
  s_\ell v^\ell_m=(-1)^{(\ell-m)/2}v^\ell_{-m}.
\]

Let
  $C\in Z(U_q\mathfrak{sl}_2)$ be the Casimir element
  \[
    C=EF+\frac{q^{-1}K+qK^{-1}}{(q-q^{-1})^2}=FE+\frac{qK+q^{-1}K^{-1}}{(q-q^{-1})^2},
  \]
  belonging to the centre of $U_q\mathfrak{sl}_2$. Its value in
  $L_\ell$ is
  \[
    c_\ell=\frac{q^{\ell+1}+q^{-\ell-1}}{(q-q^{-1})^2}.
  \]
  One has the well-known identity, easily checked by induction,
  \begin{equation}\label{e-id}
    E^jF^j=\prod_{i=0}^{j-1} \left(
      C-\frac{q^{2i+1}K^{-1}+q^{-2i-1}K}{(q-q^{-1})^2} \right).
  \end{equation}
  \begin{lemma}\label{l-A1} Let $A_m(z)$ be the formal power series
    \eqref{e-5} and $B_m(z)$ the formal power series defined by \eqref{e-7}. Then
  \begin{align*}
    A_m(z)v_m^\ell&=\prod_{j=0}^{\frac{\ell-|m|}2-1}
    \frac{z- q^{\ell-2j}}{1-z q^{\ell-2j}}v_{-m}^\ell
  \\
    B_m(z)v_m^\ell&=\prod_{j=0}^{\frac{\ell-|m|}2-1}
    \frac{1-z q^{-\ell+2j}}{1-z q^{\ell-2j}}v_{m}^\ell
  \end{align*}
\end{lemma}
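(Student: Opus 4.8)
The plan is to reduce both identities to computations inside the irreducible $U_q\mathfrak{sl}_2$-modules $L_\ell$, evaluate $A_m(z)v^\ell_m$ directly, and then obtain the formula for $B_m(z)$ and the case $m\le0$ by soft arguments. On any finite-dimensional type I module only finitely many terms of the defining sums \eqref{e-5} and \eqref{e-7} act nontrivially, so $A_m(z)$ and $B_m(z)$ are well-defined operators, and by complete reducibility it suffices to take $U=L_\ell$. There $v^\ell_m$ spans the weight space $L_\ell[m]$, while $A_m(z)v^\ell_m\in L_\ell[-m]$ and $B_m(z)v^\ell_m\in L_\ell[m]$ lie in one-dimensional spaces, so each equals a scalar times a basis vector; the content of the lemma is the computation of these scalars.

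Take $m\ge0$ first and put $k=\tfrac{\ell-m}{2}$, so $v^\ell_m=v^\ell_{\ell-2k}=F^{(k)}v^\ell_\ell$. The standard relations $F^{(a)}v^\ell_{\ell-2r}=\qbinomial{r+a}{a}v^\ell_{\ell-2r-2a}$ and $E^{(a)}v^\ell_{\ell-2r}=\qbinomial{\ell-r+a}{a}v^\ell_{\ell-2r+2a}$ (both vanishing outside $0\le r\le\ell$) give $E^{(j)}F^{(j+m)}v^\ell_m=\qbinomial{k+j+m}{j+m}\qbinomial{k}{j}v^\ell_{-m}$, hence from \eqref{e-5}
\[
  A_m(z)v^\ell_m=\Bigl(\sum_{j=0}^{k}(-q)^{j}\,\frac{1-q^{m}z}{1-q^{2j+m}z}\,\qbinomial{k+j+m}{j+m}\,\qbinomial{k}{j}\Bigr)\,v^\ell_{-m}.
\]
It remains to show that this scalar equals $\prod_{j=0}^{k-1}\frac{z-q^{\ell-2j}}{1-zq^{\ell-2j}}=\prod_{i=1}^{k}\frac{z-q^{m+2i}}{1-zq^{m+2i}}$.

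This scalar identity is where I expect the real work to lie. I would establish it either by recognising the sum as a terminating basic hypergeometric series and invoking a classical $q$-summation theorem (a terminating $q$-Chu--Vandermonde / $q$-Gauss identity), or more elementarily as follows: multiply both sides by $\prod_{i=1}^{k}(1-zq^{m+2i})$ to obtain an equality of two polynomials in $z$ of degree $k$ (the pole of the $j$-th summand at $z=q^{-m-2j}$ being cancelled by the factor $1-zq^{m+2j}$, and the $j=0$ summand being a constant); then evaluate at each of the $k$ points $z=q^{-m-2i}$, $1\le i\le k$ — where only the $j=i$ term survives and the equality collapses to an identity among products of $q$-integers — and separately match the leading coefficients, i.e.\ the $z\to\infty$ asymptotics. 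Since a polynomial of degree $\le k$ is determined by these $k+1$ conditions, this proves the identity.

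Finally the remaining points are formal. For $m\le0$ the analogous computation (applying $F^{(j)}$, then $E^{(j+|m|)}$) gives $A_m(z)v^\ell_m=\bigl(\sum_j(-q)^j\frac{1-q^{|m|}z}{1-q^{2j+|m|}z}\qbinomial{k}{j+|m|}\qbinomial{k+j}{j}\bigr)v^\ell_{-m}$ with $k=\tfrac{\ell-m}{2}$, and the elementary $q$-binomial identity $\qbinomial{k}{j+|m|}\qbinomial{k+j}{j}=\qbinomial{k+j}{j+|m|}\qbinomial{k-|m|}{j}$ matches this termwise with the $m\ge0$ sum for the weight-$|m|$ vector of $L_\ell$, so the $m\le0$ case follows from the $m\ge0$ one (alternatively it follows from the Cartan involution, as in Corollary \ref{c-A1}). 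For $B_m(z)$: by \eqref{e-7} one has $A_m(z)=A_m(0)B_m(z)$, and since $B_m(z)$ has weight zero, $B_m(z)v^\ell_m=c(z)v^\ell_m$ for a scalar $c(z)$, whence $A_m(z)v^\ell_m=c(z)\,A_m(0)v^\ell_m$. Dividing the value of $A_m(z)v^\ell_m$ just obtained by its value at $z=0$, namely $\prod_{j=0}^{(\ell-|m|)/2-1}(-q^{\ell-2j})\,v^\ell_{-m}$, gives $c(z)=\prod_{j=0}^{(\ell-|m|)/2-1}\frac{1-zq^{-\ell+2j}}{1-zq^{\ell-2j}}$, which is the asserted formula.
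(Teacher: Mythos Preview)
Your argument is correct and follows essentially the same route as the paper: both compute the action on $L_\ell[m]$, obtain a scalar rational function of $z$ with simple poles at $z=q^{-m-2i}$ ($1\le i\le k$), and determine it by matching these $k$ residues plus one further condition. The differences are minor. The paper uses the Casimir identity \eqref{e-id} in place of your explicit $E^{(a)},F^{(a)}$ action (equivalent), and for the extra condition evaluates at $z=q^{-m}$ rather than matching leading coefficients --- at $z=q^{-m}$ the common numerator $1-q^mz$ kills every $j\ge1$ term, so only $j=0$ survives and the check reduces to $\qbinomial{k+m}{m}=\prod_{i=1}^{k}[m+i]_q/[i]_q$, which is immediate; your $z\to\infty$ check works but requires a $q$-Vandermonde-type identity. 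For $m\le0$ the paper instead invokes the inversion relation $A_{-m}(z^{-1})A_m(z)=\mathrm{id}$, whereas your termwise reduction via $\qbinomial{k}{j+|m|}\qbinomial{k+j}{j}=\qbinomial{k+j}{j+|m|}\qbinomial{k-|m|}{j}$ is more self-contained.
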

\begin{proof}
  Let us first consider the case $m\geq0$.

  The identity \eqref{e-id} implies that
  on the subspace of $L_\ell$ of weight $-\ell+2k$, we have
  \begin{align*}
    E^jF^j|_{L_{\ell}[-\ell+2k]}
    &=\prod_{i=0}^{j-1}
      \frac{q^{\ell+1}+q^{-\ell-1}-q^{2i+1+\ell-2k}-q^{-2i-1-\ell+2k}}{(q-q^{-1})^2}
    \\
    &=\prod_{i=0}^{j-1}
      \frac{(q^{\ell+1-k+i}-q^{-\ell-1+k-i})(q^{k-i}-q^{-k+i})}{(q-q^{-1})^2}
    \\
    &=\prod_{i=0}^{j-1}
      [\ell+1-k+i]_q[k-i]_q.
  \end{align*}
  It follows that for $m=\ell-2k\geq0$
  \[
    A_m(z)=\sum_{j=0}^{k}(-q)^j\frac{1-q^mz}{1-q^{2j+m}z}\frac1{[j]_q![j+m]_q!}
    \prod_{i=0}^{j-1}[\ell+1-k-i]_q[k-i]_qF^m
  \]
  Therefore,
  \begin{align*}
    A_m(z)v^\ell_{m}
    &= \sum_{j=0}^{k}(-q)^j\frac{1-q^mz}{1-q^{2j+m}z}\frac{[\ell-k]_q!}{[j]_q![j+m]_q![k]_q!}
      \prod_{i=0}^{j-1}[\ell+1-k-i]_q[k-i]_qv^\ell_{-m}
    \\
    &= \sum_{j=0}^{k}(-q)^j\frac{1-q^{\ell-2k}z}{1-q^{2j+\ell-2k}z}\frac{\prod_{i=0}^{k-1}[\ell-k+j-i]_q}{[j]_q![k-j]_q!}v^\ell_{-m}.
  \end{align*}
  We thus have a rational function of $z$ with simple poles at
  $z=q^{-\ell+2k-2},$ $q^{-\ell+2k-4},$ $\dots,q^{-\ell}$ which is bounded
  at infinity. By comparing the residues at the poles and the value
  $\genfrac[]{0pt}{1}{\ell-k}{k}_q$ at $z=q^{-\ell+2k}$, we see that
  \[
    A_m(z)v^\ell_m=
    \prod_{j=0}^{k-1}\frac{z-q^{\ell-2j}}
    {1-zq^{\ell-2j}}v^\ell_{-m},\quad \text{if $m=\ell-2k\geq0$.}
  \]
  For $m\leq 0$, the inversion relation implies that
  $A_{m}(z)v^\ell_m=A_{|m|}(z^{-1})^{-1}v^\ell_{m}$. With $-m=\ell-2k\geq0$ we
  find that
  \begin{align*}
    A_m(z)v^\ell_m
    &=\prod_{j=0}^{k-1}
      \frac{1-z^{-1}q^{\ell-2j}}{z^{-1}-q^{\ell-2j}}v^\ell_{-m}
    \\
    &=
      \prod_{j=0}^{k-1}\frac{z-q^{\ell-2j}}{1-zq^{\ell-2j}}v^\ell_{-m},
      \quad \text{if $-m=\ell-2k\geq0$.}
  \end{align*}
  This proves the formula for $A_m(z)$ since $k=\frac{\ell-|m|}2$. In
  particular, $A_m(0)v^\ell_m=\prod_{j=0}^{k-1}(-q^{\ell-2j})v^\ell_{-m}$, and
  the formula for $B_m(z)=A_m(0)^{-1}A_m(z)$ follows.
\end{proof}
\begin{cor}\label{c-A1}
  Let $U$ be any finite dimensional representation of $U_q\mathfrak{sl}_2$. Then for any $m\in\mathbb Z$
  \[
    \theta(B_m(z))|_{U[-m]}=B_{-m}(z)|_{U[-m]}
  \]
\end{cor}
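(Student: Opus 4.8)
The plan is to reduce the statement to the explicit formula for $B_m(z)$ on irreducible representations provided by Lemma \ref{l-A1}, since $B_m(z)$ is determined (as an endomorphism of weight spaces) by its action on all finite-dimensional representations, and by complete reducibility it suffices to treat $U=L_\ell$. First I would fix $\ell$ and $m$ with $|m|\le\ell$ and note that the weight space $L_\ell[-m]$ is one-dimensional, spanned by $v_{-m}^\ell$. Both sides of the claimed identity are then scalars, so the task is to compute $\theta(B_m(z))v_{-m}^\ell$ and compare it to $B_{-m}(z)v_{-m}^\ell$.

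The key computation uses the conjugation property of the Cartan involution recalled in the appendix: $\theta(X)v = s_\ell X s_\ell^{-1} v$ for $X\in U_q\mathfrak{sl}_2$ and $v\in L_\ell$. Since $B_m(z)$ is a formal power series in the generators $E,F,K^{\pm1}$ (see Proposition \ref{p-6}), this gives $\theta(B_m(z))v_{-m}^\ell = s_\ell\,B_m(z)\,s_\ell^{-1}v_{-m}^\ell$. Now $s_\ell^{-1}v_{-m}^\ell$ is a multiple of $v_{m}^\ell$, and by Lemma \ref{l-A1} we know $B_m(z)v_m^\ell = \prod_{j=0}^{(\ell-|m|)/2-1}\frac{1-zq^{-\ell+2j}}{1-zq^{\ell-2j}}\,v_m^\ell$. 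Applying $s_\ell$ again sends $v_m^\ell$ back to a multiple of $v_{-m}^\ell$, and the two factors $s_\ell,s_\ell^{-1}$ contribute signs $(-1)^{(\ell-m)/2}$ and $(-1)^{(\ell+m)/2}$ whose product is $(-1)^\ell$; I should double-check this sign cancels correctly, but since $B_m(z)v_m^\ell$ is proportional to $v_m^\ell$ (not a flipped vector), the two sign factors multiply to $(-1)^{(\ell-m)/2+(\ell+m)/2}=(-1)^{\ell}$ times the square of the normalization, and in fact $s_\ell s_\ell^{-1}=\mathrm{id}$ on the line so no net sign survives — the cleanest phrasing is simply $\theta(B_m(z))v_{-m}^\ell = (s_\ell B_m(z) s_\ell^{-1})v_{-m}^\ell$ and $s_\ell$ permutes the eigenbasis, so the scalar by which $\theta(B_m(z))$ acts on $L_\ell[-m]$ equals the scalar by which $B_m(z)$ acts on $L_\ell[m]$.

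The final step is to observe that the eigenvalue of $B_m(z)$ on $L_\ell[m]$, namely $\prod_{j=0}^{(\ell-|m|)/2-1}\frac{1-zq^{-\ell+2j}}{1-zq^{\ell-2j}}$, is visibly symmetric under $m\mapsto -m$, since it depends on $m$ only through $|m|$. But this same product is, again by Lemma \ref{l-A1}, precisely the eigenvalue of $B_{-m}(z)$ on $L_\ell[-m]$. Hence $\theta(B_m(z))|_{L_\ell[-m]} = B_{-m}(z)|_{L_\ell[-m]}$ for every irreducible $L_\ell$, and complete reducibility gives the statement for arbitrary finite-dimensional $U$. The main obstacle is bookkeeping: making sure the Cartan involution is applied correctly as a (coalgebra anti-)homomorphism to the power series $B_m(z)$ — here it is enough that $\theta$ is an algebra automorphism, so it acts term-by-term on any polynomial in $E,F,K^{\pm1}$ — and tracking the $s_\ell$-conjugation signs so that one genuinely lands on the $|m|$-symmetric eigenvalue rather than a twisted version of it.
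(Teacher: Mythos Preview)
Your proof is correct and follows exactly the same route as the paper's: reduce to irreducibles by complete reducibility, use the conjugation identity $\theta(X)=s_\ell X s_\ell^{-1}$ on $L_\ell$ to transport the scalar action of $B_m(z)$ from $L_\ell[m]$ to $L_\ell[-m]$, and then invoke Lemma~\ref{l-A1} to see that this scalar depends only on $|m|$ and hence agrees with that of $B_{-m}(z)$. Your hesitation about signs is unnecessary---since $B_m(z)$ acts as a scalar on the one-dimensional space $L_\ell[m]$, conjugating by any invertible map (in particular $s_\ell$) yields the same scalar on the image line, so no sign bookkeeping is needed.
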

\begin{proof}
  It is sufficient to prove this for $U$ irreducible. In this case the statement follows from Lemma \ref{l-A1} and
  the identity $\theta(X)=s_\ell X s_\ell^{-1}$ for the action of $X\in U_q\mathfrak{sl}_2$ on $L_\ell$.
\end{proof}
\section{Proof of Theorem \ref{t-KacMoody}, by Anfisa Gurenkova}\label{a-2}

We use the notations of Section \ref{ss-KacMoody} and adopt the conventions of \cite{EtingofVarchenko2002}. We denote by $\mathbb W$ and
$\Tilde{\mathbb W}$ the Weyl group and the braid group of the root system of the symmetrizable Kac--Moody algebra $\mathfrak g$, respectively.
For each element $w\in\mathbb{W}$ and an integrable $U_q\mathfrak{g}$-module $V$ Etingof and Varchenko \cite{EtingofVarchenko2002} define an operator $\mathscr{A}_{w, V}(\lambda, \mu): V[\mu]\rightarrow V[w\mu]$. 
The following is true:

\begin{prop}[Lemma 17 in \cite{EtingofVarchenko2002}]
  \label{EV braid relation of dynWeyl}
If $l(w_1 w_2)=l(w_1)+l(w_2)$, then
\begin{equation}\label{action of A_w1w2 via A_w1 and A_w2}
    \mathscr{A}_{w_1w_2, V}(\lambda, \mu) = \mathscr{A}_{w_1, V}(w_2\lambda, w_2\mu) \mathscr{A}_{w_2, V}(\lambda, \mu) 
\end{equation}
\end{prop}

% For each connected subset $J\subset\{1\ldots r\}$ denote by $\fg(J)$ a subalgebra in $\fg$ generated by $e_j,f_j, h_j$ for $j\in J$. Similarly, $U_q(\fg)(J)$ is a subalgebra in  $U_q\fg$ generated by $e_j, f_j, q^{h_j}$ for $j\in J$. There is an isomorphism $U_q(\fg)(J) \cong U_{q^k}(\fg(J))$, where $k=\frac{d_i}{d_i'}$, where $d_i'$ is the set corresponding to $\fg(J)$. We also have an embedding $j_J: \IW(J)\hookrightarrow \IW$ and $j_J:\fh(J)\hookrightarrow \fh$.

% Setting $J=\{i\}$ we get an embedding $j_i: U_{q_i}(\mathfrak{sl}_2)\rightarrow U_{q}(\mathfrak{g})$. 

% (The case of disconnected $J$ may be treated also by passing to suitable products, and $k$-s must be specified for each factor).

The definition of the operators $\mathscr{A}_w$ is consistent with restricting to the root $\fsl_2$ in the following sense (see \cite[Section 4.1]{EtingofVarchenko2002}).

\begin{prop}
    For any $i$:
    \begin{equation}\label{A_si via sl_2}
        \mathscr{A}_{s_i, V}(\la, \mu) = \mathscr{A}_{s, V'}(\lambda(h_i),\mu(h_i)),
    \end{equation}
    where $V'$ is the $U_{q_i}(\mathfrak{sl}_2)$-module pulled back by the embedding $j_i$.
\end{prop}

Due to these two facts, it is enough to construct the operators {}$\mathcal A_{s,V}(l,m)$ for $U_q\fsl_2$-modules $V$. Then one defines an operator $\mathscr{A}_{w,V}$ by decomposing $w$ into simple reflections \eqref{action of A_w1w2 via A_w1 and A_w2} and then using \eqref{A_si via sl_2}. Also one can define $\mathscr{A}_{w,V}$ for any $w\in\Tilde{\IW}$ by this procedure.

By Proposition {}\ref{EV braid relation of dynWeyl},
% \ref{action of A_w1w2 via A_w1 and A_w2},
the operators $\mathscr{A}_{w, V}$ define an action of $\Tilde{\IW}$ on $V$-valued functions of $\la$ as follows:

\begin{equation}\label{action of A on functions}
    (w * u)(\la) = \mathscr{A}_{w,V}(w^{-1}(\la)) u(w^{-1}\la).
\end{equation}

Let $P\Tilde{\mathbb{W}}$ be the pure braid group, i.e. the kernel of the map $\Tilde{\mathbb{W}}\rightarrow \mathbb{W}$. It is a normal subgroup in $\Tilde{\mathbb{W}}$ generated by $s_i^2$. Notice that  $\mathscr{A}_{s_i}(s_i\la,s_i\mu)\mathscr{A}_{s_i}(\la,\mu)$ is a weight zero operator. Moreover, it is diagonal in a basis of $V[\mu]$ which is consistent with the decomposition of $V$ into irreducible $j_i(U_{q_i}(sl_2))$-modules.
We will see in Lemma \ref{character of pure braid} that in fact it acts by a scalar (depending on $\la$, $\mu$, and $q$). Therefore it defines a character $\chi_V: P\Tilde{\mathbb{W}} \rightarrow Fun(\la,\mu,q)$ with values in the meromorphic functions in $\la$, $\mu$, and $q$.

Our goal is to modify {}the action \eqref{action of A on functions} by multiplying each $\mathscr{A}_w(\la,\mu)$ by a suitable function of $\la, \mu, q$ so that it factors through $\IW$.

First, we find the type of functions which do not break the braid relation. Second, we compute the character and see that it belongs to this type of functions.

\begin{lemma}\label{renormalization does not affect braid relations}

Let \[\Tilde{\mathscr{A}_{s_i}}(\la,\mu) = f(\la(h_i), \mu(h_i),q_i )\mathscr{A}_{s_i}(\la,\mu)\] for some function $f(l,m,q)$. Then the operators $\Tilde{\mathscr{A}_{s_i}}(\la,\mu)$ satisfy the braid relations.
\end{lemma}

\begin{proof}
For any two nodes $1,2$ of the Dynkin diagram consider the corresponding simple reflections $s_1, s_2$. Either there is no relation between them, or they satisfy the braid relation $s_i\cdots s_2 s_1 s_2  = s_j\cdots s_1 s_2 s_1$. Here the number of factors on both sides is equal to some integer $m_{12}$, depending on the number of edges between the nodes $1$ and $2$,
% is equal to $m_{12}=\frac{\pi}{\text{angle between } h_1, h_2}$ (the angle being well-defined due to positive-definiteness of the restriction $(\cdot,\cdot)$ to span$(h_1,h_2)$),
and  $(i,j)=(1,2)$ or $(2,1)$ depending on the parity of $m_{12}$. Denote this product by $w$. Since $l(w) = m_{12}$, by Proposition \ref{EV braid relation of dynWeyl} we have 

\begin{multline}\label{braid equation}
    \mathscr{A}_{s_i, V}(s_j\cdots s_2(\lambda, \mu))\cdots\mathscr{A}_{s_1, V}(s_2(\lambda, \mu)) \mathscr{A}_{s_2, V}(\lambda, \mu)  = \\ = \mathscr{A}_{s_j, V}(s_i\cdots s_1(\lambda, \mu))\cdots\mathscr{A}_{s_2, V}(s_1(\lambda, \mu)) \mathscr{A}_{s_1, V}(\lambda, \mu) 
\end{multline}

When we replace $\mathscr{A}$-s by $\Tilde{\mathscr{A}}$-s in the  LHS of \eqref{braid equation}, it is multiplied by 

\[f\Big(  s_j\cdots s_2(\lambda, \mu)(h_i),q_i\Big)\cdots f\Big(  s_2(\lambda, \mu)(h_1),q_1\Big) f\Big ((\lambda, \mu)(h_2),q_2\Big), \]

The following fact applied to a subalgebra corresponding to the pair of roots $\alpha_1,\alpha_2$ finishes the proof:

\begin{lemma}
    Let $\mathfrak{g}$ be a finite-dimensional Lie algebra of rank 2 and $w_0=s_{i_1}\cdots s_{i_l}$ be a reduced word for the longest element in $\mathbb{W}$. Then for any weight $\la$ the set \{$\big(s_{i_{k+1}}\cdots s_{i_l}\la(h_{i_k}),d_{i_k}\big)| k=1\ldots l-1$\} does not depend on a reduced word.
\end{lemma}
\begin{proof}
    It is a reformulation of Lemma 2 in \cite{EtingofVarchenko2002}. For reader's convenience we reproduce its proof here.

    Let $(\ ,\ )$ be the non-degenerate bilinear form on $\fh$ such that $(h, h_i) = d^{-1}_i\alpha_i(h)$.        
    Notice that $s_{i_{k+1}}\cdots s_{i_l}\la(h_{i_k}) = \la(s_{i_l}\cdots s_{i_{k+1}} h_{i_k})$.
    Denote $h^k := s_{i_l}\cdots s_{i_{k+1}} h_{i_k}$. Also notice that $d_i = \frac{2}{(h_i, h_i)} = \frac{2}{(wh_i,wh_i)}$ for any $w\in \IW$.
    % Also set $d(wh_i) := d_i$ (this is well-defined since $d_i = \frac{2}{(h_i, h_i)}$, and the Weyl group preserves the scalar product) and $d^k := d(h^k)$. 
    Since $w_0$ is the longest element, $\{h^k\}$ is the set of all the positive coroots, each appearing exactly once. 
    Therefore the set in the statement is $\{\big(\la(h^l), \frac{2}{(h^l,h^l)}\}$ which does not depend on a reduced decomposition. 
\end{proof}

\end{proof}

\begin{lemma}\label{character of pure braid}
The character $\chi_{V}: P\Tilde{\mathbb{W}} \rightarrow Fun(\la,\mu,q)$ is defined by

    \[\chi_{V}(s_i^2) = (-1)^{\mu(h_i)}\frac{[(\la+\frac{\mu}{2})(h_i)]_{q_i}}{[(\la-\frac{\mu}{2})(h_i)]_{q_i}}\]
\end{lemma}

\begin{proof}
    By the consistency with the restriction (\ref{A_si via sl_2}) it is enough to prove the statement for $U_q\fsl_2$. Let $V_n$ denote the irreducible $\fsl_2$-module of the highest weight $n$. By corollary 8, Proposition 12 and the definitions of section 4.2 of \cite{EtingofVarchenko2002},  \[\mathscr{A}_{s, V_n[n-2k]}(l, n-2k) = A^\infty_{V_n[n-2k]} B_{V_n[n-2k]},\] where $A^\infty_{V_n[n-2k]}v_{n-2k} = (-1)^kq^{n-2k}v_{2k-n}$ and $B_{V_n}$ is a weight zero operator defined by

\[B_{V_n[n-2k]}(l) = \prod_{j=1}^k\frac{[l+1+j]_q}{[l+1-n+2k-j]_q}.\]

Therefore for $m=n-2k$
 
    \begin{align*}
      \chi_{V_n}(s^2)(l,m)
      &= \mathscr{A}_{s, V_n}(-l,-m) \mathscr{A}_{s, V_n}(l, m)
      \\
      &= B_{V_n[-m]}(l-1+\frac{-m}{2}) B_{V_n[m]}(-l-1+\frac{m}{2})\cdot(-1)^n
      \\
      &=(-1)^m\prod_1^{n-k}\frac{[l+\frac{-n+2k}{2}+j]_q}{[l-\frac{-n+2k}{2}-j]_q}\prod_1^k\frac{[-l+\frac{n-2k}{2}+j]_q}{[-l-\frac{n-2k}{2}-j]_q}
      \\
      &=
        (-1)^m\frac{\prod_{k+1}^{n}[l-\frac{n}{2}+j]_q\prod_0^{k-1}[l-\frac{n}{2}+j]_q}{\prod_{k+1}^{n}[l+\frac{n}{2}-j]_q\prod_0^{k-1}[l+\frac{n}{2}-j]_q}
      \\
      &=(-1)^m\prod_{-\frac{n}{2}}^{\frac{n}{2}}\frac{[l+j]_q}{[l-j]_q}\cdot\frac{[l+\frac{n}{2}+k]_q}{[l-\frac{n}{2}-k]_q} = (-1)^n\frac{[l+\frac{m}{2}]_q}{[l-\frac{m}{2}]_q}
    \end{align*} 
\end{proof}

\begin{prop}
    Let \[A_{s_i, V[\mu]} = \begin{cases}
    q_i^{-\mu(h_i)}\mathscr{A}_{s_i,V[\mu]}, & \text{if } \mu(h_i)\geq 0, \\
    (-1)^{\mu(h_i)}q_i^{-\mu(h_i)}\frac{[(\la-\frac{\mu}{2})(h_i)]_{q_i}}{[(\la+\frac{\mu}{2})(h_i)]_{q_i}}\mathscr{A}_{s_i, V[\mu]}, & \text{if } \mu(h_i)< 0\end{cases}\]

    Then $A_{s_i}$ define the action of the Weyl group on $V-$valued functions of $\la$ via formulas \eqref{action of A_w1w2 via A_w1 and A_w2} and \eqref{action of A on functions}.
\end{prop}

\begin{proof}
    From Lemma \ref{renormalization does not affect braid relations} it follows that $A_{s_i}$ satisfy the braid relations. It suffices to check the relation $(s_i*)^2 = 1$ for $U_q\fsl_2$. In this case

    \[A_{s, V_n}(-l,-m)A_{s,V_n}(l,m) = (-1)^n\frac{[-l+\frac{m}{2}]_q}{[-l-\frac{m}{2}]_q} \cdot \chi_{V_n}(s^2)(l,m) = 1\]
\end{proof}

\begin{remark}
    In the case $\fg=\fsl_n$ the operators $A_{s_i, V[\mu]}$ coincide with the operators defined in Proposition \ref{p-3}.
  \end{remark}

  \section*{Conflict of interest}
  The authors have no conflict of interest to declare that are relevant to this article.
\newpage

\printbibliography

\end{document}